\theoremstyle{plain}
\newtheorem{theorem}{Theorem}[section]
\newtheorem{proposition}[theorem]{Proposition}
\newtheorem{lemma}[theorem]{Lemma}
\theoremstyle{definition}
\newtheorem{definition}{Definition}[section]
\theoremstyle{remark}
\newtheorem{remark}{Remark}[section]
\numberwithin{equation}{section}
\newcommand{\N}{\mathbb{N}}
\newcommand{\Z}{\mathbb{Z}}
\newcommand{\R}{\mathbb{R}}
\def\ocirc#1{\ifmmode\setbox0=\hbox{$#1$}\dimen0=\ht0
    \advance\dimen0 by1pt\rlap{\hbox to\wd0{\hss\raise\dimen0
    \hbox{\hskip.2em$\scriptscriptstyle\circ$}\hss}}#1\else
    {\accent"17 #1}\fi} 
\newcommand{\eps}{\varepsilon}
\newcommand{\F}{\mathcal{F}}
\newcommand{\PP}{\mathbb{P}}
\newcommand{\dual}[2]{\langle #1, #2\rangle}
\newcommand{\E}{\mathbb{E}}
\newcommand{\T}{\mathbb{T}}
\newcommand{\LL}{\mathscr{L}}
\DeclareMathOperator{\esssup}{ess\,sup}
\newcommand{\nx}{\nabla_{\!\!x}\,} 
\newcommand{\cd}{\!\cdot\!} 
\newcommand{\axi}{a^\xi} 
\begin{document}

\title{Global solutions to quadratic systems of stochastic reaction-diffusion equations in space-dimension two.}
\author{M. Leocata\thanks{Scuola Normale Superiore, Pisa, Italy, marta.leocata@sns.it} and J. Vovelle\thanks{UMPA, CNRS, ENS de Lyon, julien.vovelle@ens-lyon.fr}}
\maketitle

\begin{abstract} We prove the existence of global-in-time regular solutions to a system of stochastic quadratic reaction-diffusion equations. Global-in-time existence is based on a $L^\infty$-estimate obtained by an approach \`a la De Giorgi, as in \cite{GoudonVasseur10}. The adaptation of this technique to the stochastic case requires in its final step an $L^2\ln(L^2)$-bound, furnished by an estimate by duality on the entropy inequality, as in \cite{DesvillettesFellnerPierreVovelle07}. In our stochastic context, and similarly to \cite{DebusscheRoselloVovelle2021}, we need to solve a backward SPDE to exploit the duality technique.
\end{abstract}
%

\normalsize

\tableofcontents
\bigskip

\section{Introduction}\label{sec:intro}
\subsection{Quadratic stochastic reaction-diffusion system}\label{subsec:RDsystem}

We consider the binary reversible chemical reaction
\begin{equation}\label{chemical}
A_1+A_3\rightleftharpoons A_2+A_4.
\end{equation}
The species $A_i$, $i=1,\ldots,4$ move by diffusion in the periodic domain $\T^d=\R^d/\Z^d$. Some statements below (entropy estimate, $L^2$-estimate by duality) are given in any space dimension $d$, but our main results ($L^\infty$-bounds, global-in-time solutions) is restricted to the space dimensions $d=1$ or $d=2$. The evolution of the concentrations $a_i(t)$, $i=1,\ldots,4$, of the species $A_i$, $i=1,\ldots,4$ is described by the following system of equations
\begin{equation}\label{SS}
\begin{split}
 d a_i-\nx\cd \left(\kappa_i\nx a_i\right)dt&=f_i(a)dt+\sigma_i^\alpha(a)d B_\alpha(t),\; {\rm in}\  \;Q_T:= \T^d\times(0,T),\\
a_i(0)&=a_{i0}\geq 0,\; {\rm in}\   \T^d,
\end{split}
\end{equation}
for $i=1,\ldots,4$, where $f)i(a)$ is the reaction term, given by 
\begin{equation}\label{fi-quadratic}
f_i(a)=(-1)^i(a_1 a_3-a_2 a_4).
\end{equation}
The family $\left\{ B_\alpha;1\leq\alpha\leq d_W\right\}$ is a finite family of independent one-dimensional Wiener processes of the filtered probability space $(\Omega,\F,\PP,(\F_t))$. Summation over the repeated index $\alpha$ is used in \eqref{SS}. Our modelling approach, explained in Section~\ref{subsec:modelling} below, shall lead us to consider coefficients $\sigma_i^\alpha(a)$ of the type
\begin{equation}\label{true-sigma-alpha}
\sigma^1_i(a)=(-1)^i\sqrt{a_1 a_3},\quad\sigma^2_i(a)=(-1)^i\sqrt{a_2 a_4},\quad\sigma_i^\alpha=0,\;\forall\alpha\geq 3.
\end{equation}
However, we need the cancellation condition 
\begin{equation}\label{cancel-sigma-alpha}
a_i=0\Rightarrow\sigma_i^\alpha(a)=0,\;\forall i,
\end{equation}
to ensure that the solutions to \eqref{SS} stay non-negative, so we assume that \eqref{true-sigma-alpha} is satisfied asymptotically only: for $\alpha\in\N\setminus\{0\}$, $\sigma_i^\alpha\colon\R^d\to\R$ is a smooth function satisfying \eqref{cancel-sigma-alpha} and the growth condition
\begin{equation}\label{growth-sigma-alpha2}
\sum_\alpha|\sigma_i^\alpha(a)|^2\leq \nu(a_1 a_3+a_2a_4),
\end{equation}
where $\nu$ is a positive constant. We also assume that 
\begin{equation}\label{positivekappa}
\underline{\kappa}:=\min_{1\leq i\leq 4}\kappa_i>0.
\end{equation}
We assume the diffusion coefficients $\kappa_i$ to be constant, but variable (deterministic) coefficients depending on the variable $(t,x)$ may also be considered, as long as the following bounds are satisfied:
\begin{equation}\label{barkappaT}
\sup_{1\leq i\leq 4}\left(\|\nx\kappa_i\|_{L^\infty(Q_T)}+\|\kappa_i\|_{L^\infty(Q_T)}+\|\kappa_i^{-1}\|_{L^\infty(Q_T)}\right)<+\infty.
\end{equation}
We note also that we may as well consider different, independent noises for each equation in \eqref{SS}, as in \eqref{SSapp}. Let us remark further that the restriction $d_W<+\infty$, \textit{i.e.} the fact that we work with a finite-dimensional Wiener process, is relevant only in the justification of Theorem~\ref{th:RegSolSyst} in appendix~\ref{sec:app1}. \medskip

\begin{remark}[Boundary conditions]\label{rk:DirBC} We consider periodic conditions to avoid the compatibility problems that arise in parabolic SPDEs when Dirichlet conditions (even homogeneous Dirichlet conditions) are considered (all the more since, for a system of equations, we would have to consider a system of compatibility conditions, for which existence of solution is not guaranteed without further analysis - at least if one would not consider compactly supported initial data), \textit{cf.} \cite{Flandoli1990,DebusscheDeMoorHofmanova15,Gerencser2019}. It would also be very natural to consider homogeneous Neumann boundary conditions, but this is a framework for which the adequate references would be missing both for the direct parabolic stochastic problems and for the backward SPDEs used in Section~\ref{sec:L2duality}.
\end{remark}

Our main result and some additional comments and bibliographical references are given in Section~\ref{subsec:mainresult}.

\subsection{Notion of solution}\label{subsec:solution}

Let us first introduce a notion of weak solution.

\begin{definition}[Weak solution up to a stopping time] Let $a_0\in L^2(\T^d;\R^q))$ and let $\tau$ be a stopping time, $\tau>0$ a.s. A predictable $L^2(\T^d;\R^4)$-valued process $(a(t))_{t<\tau}$ is said to be a weak solution to the system \eqref{SS} up to time $\tau$ if
\begin{enumerate}
\item $\PP$-a.s., for all $t\in[0,\tau)$, $a_{i}(t)\geq 0$, for all $i=1,\dotsc,q$, 
\item\label{weaksol-L2} for all $T>0$, the stopped processes $a_i^T\colon t\mapsto a_i(t\wedge T)$ satisfy, for all $i=1,\dotsc,q$,  
\begin{equation}
a_i^T\in L^2(\Omega\times[0,T];H^1(\T^d))\cap L^2(\Omega; C([0,T];L^2(\T^d))),
\end{equation}
and
\begin{equation}
f_i(a^T)\in L^2(\Omega\times Q_T),\quad \sigma_{\alpha,i}(a^T)\in L^2(\Omega\times Q_T),
\end{equation}
\item\label{weaksol-weakeq} for all $\varphi\in H^1(\T^d)$, for all $i\in\{1,\dotsc,q\}$, $a_i$ satisfies $\PP$-almost surely, for all $t\in [0,\tau)$,   
\begin{multline}\label{weakFormulation}
\dual{a_i(t)}{\varphi}-\dual{a_{i0}}{\varphi}+\kappa_i\int_0^t \dual{\nabla_x a_i(s)}{\nabla_x\varphi}ds\\
=\int_0^t \dual{f_i(a(s))}{\varphi} ds+\int_0^t \sum_{\alpha=1}^{d_W}\dual{\sigma_{\alpha,i}(a(s))}{\varphi}dB_\alpha(s).
\end{multline}
\end{enumerate} 
\label{def:weak-sol}\end{definition}

Notations: we denote by $|a|$ the euclidean norm of a vector $a\in\R^4$:
\begin{equation}\label{l2R4}
|a|=\left(\sum_{i=1}^4 a_i^2\right)^\frac12.
\end{equation}

\begin{definition}[Regular solution up to a stopping time] 
Assume $a_{i0}\in C^\infty(\T^d)$, $i\in\{1,\ldots,4\}$. Let $\tau$ be a stopping time, $\tau>0$ a.s. A weak solution $(a(t))_{t<\tau}$ defined up to time $\tau$ is said to be a regular solution of \eqref{SS} up to time $\tau$ if, denoting by $\tau_n$ the stopping time\footnote{with the convention $\tau_n=\tau$ if $\esssup_{x\in \T^d} |a(x,t)|\leq n$ for all $t\in[0,\tau)$}
\begin{equation}\label{taun0}
\tau_n=\inf\left\{t\in[0,\tau);\esssup_{x\in \T^d} |a(x,t)|>n\right\},
\end{equation}
the following properties are satisfied: for all $p\in[2,\infty)$, $q\in(2,\infty)$, $m\in\N$ with $m\geq 2$ and $mp>d+2$, and all $T>0$, the stopped processes $a^{\tau_n\wedge T}\colon t\mapsto a(t\wedge\tau_n\wedge T)$ satisfy
\begin{equation}
a^{\tau_n\wedge T}\in L^q(\Omega;C([0,T];W^{m,p}_0(\T^d)))\cap L^{mq}(\Omega;C([0,T];W^{1,mp}_0(\T^d))).
\end{equation}
A regular solution up the stopping time $\tau\equiv+\infty$ is said to be a global-in-time regular solution.
\end{definition}

\paragraph{Construction of a regular solution up to a stopping time.} Introduce the truncated non-linearities
\begin{equation}\label{truncfsigma}
f^n_i(a)=|\chi_n(|a|)|^2 f_i(a),\quad \sigma_i^{\alpha,n}(a)=\chi_n(|a|) \sigma_i^\alpha(a),
\end{equation}
where 
\begin{equation}\label{truncationTn}
\chi_n(r)=\chi(n^{-1}r),
\end{equation}
the function $\chi\colon\R_+\to\R+$ being smooth, non-increasing, and such that $\chi(r)=1$ if $r\leq 1$, $\chi(r)=0$ if $r\geq 2$. We let $a^{[n]}$ denote the global-in-time regular solution to \eqref{SS} where $f_i$ is replaced by $f_i^n$ and $\sigma_i^\alpha$ is replaced by $\sigma^{\alpha,n}_i$. The existence of  $a^{[n]}$ is ensured by Theorem~\ref{th:RegSolSyst} in Appendix~\ref{sec:app1}. We then set 
\begin{equation}\label{taun}
\hat{\tau}_n=\inf\left\{t\geq 0;\esssup_{x\in \T^d} \|a^{[n]}(x,t)\|>n\right\}.
\end{equation}
Since $f^{n+1}_i(a)=f^n_i(a)$, $\sigma^{\alpha,n+1}_i(a)=\sigma^{\alpha,n}_i(a)$ if $|a|\leq n$, the functions
\begin{equation}\label{taunn}
t\mapsto a^{[n]}(t\wedge\hat{\tau}_n\wedge\hat{\tau}_{n+1})\mbox{ and } t\mapsto a^{[n+1]}(t\wedge\hat{\tau}_n\wedge\hat{\tau}_{n+1})
\end{equation}
satisfy the same equation. By uniqueness (Theorem~\ref{th:RegSolSyst}) they coincide,
with the consequence that $\hat{\tau}_n\leq\hat{\tau}_{n+1}$, and $a^{[n]}=a^{[n+1]}$ a.s. on $[0,\hat{\tau}^n]$. We can the consider the limit
\begin{equation}\label{stoppingtau}
\tau=\lim_{n\to+\infty}\hat{\tau}_n.
\end{equation}
Then $\tau$ is a stopping time such that $\tau>0$ a.s. and the process $(a(t))$ defined by
\begin{equation}\label{defatau}
a(t)=\begin{cases}
a^{[n]}(t) & \mbox{if } t\leq\hat{\tau}_n\\
0 &\mbox{if } t>\tau
\end{cases}
\end{equation}
is a regular solution to \eqref{SS} up to the stopping time $\tau$.

\subsection{Main result}\label{subsec:mainresult}

\begin{theorem}[Global solutions]\label{th:globalregular} 
Suppose that the coefficients $\sigma_i^\alpha$ satisfy \eqref{cancel-sigma-alpha} and \eqref{growth-sigma-alpha2} and that the space dimension is $d=1$ or $d=2$. Suppose that the positivity condition \eqref{positivekappa} is satisfied by the diffusion coefficients $\kappa_i$. Then there is a constant $C_B$ depending on $d$, on $\min_{1\leq i\leq 4}\kappa_i$ $\max_{1\leq i\leq 4}\kappa_i$ only such that, if the noise is sufficiently small, in the sense that the constant $\nu$ in \eqref{growth-sigma-alpha2} satisfies
\begin{equation}\label{smallnoise}
C_B\nu\leq 1,
\end{equation}
then, for all $a_{i0}\in C^\infty({\T^d})$ with $a_{i0}\geq 0$, $i=1,\ldots,4$, the problem \eqref{SS} admits a unique global-in-time regular solution.
\end{theorem}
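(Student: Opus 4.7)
The proof proceeds in two main steps: an a priori $L^\infty$-bound on the truncated solutions $a^{[n]}$ uniform in $n$ on any finite time horizon, and the deduction from it that $\hat{\tau}_n \to +\infty$. By definition \eqref{taun} of $\hat{\tau}_n$, a bound $\|a^{[n]}\|_{L^\infty(Q_T)} \leq M$ uniform in $n$ forces $\hat{\tau}_n \geq T$ for every $n > M$, whence $\tau = \lim_n \hat{\tau}_n \geq T$ for arbitrary $T$. Non-negativity of $a^{[n]}$, required for the subsequent manipulations, follows from the cancellation condition \eqref{cancel-sigma-alpha} by applying It\^o's formula to $(a_i^{[n]})_-^2$. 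Uniqueness among regular solutions on the same stochastic basis will then follow from a Gronwall argument on the $L^2$-difference of two solutions, exploiting the local Lipschitz property of $f_i$ and $\sigma_i^\alpha$ together with the $L^\infty$-bound just obtained.

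The $L^\infty$-bound is obtained by a De Giorgi iteration \`a la \cite{GoudonVasseur10}, whose initialization rests on a global $L^2\ln L^2$ estimate for $a$. To establish this estimate, first apply It\^o's formula to the entropy $H(a) = \sum_i (a_i \ln a_i - a_i + 1)$: the reaction contribution $\sum_i f_i(a) \ln a_i = -(a_1 a_3 - a_2 a_4) \ln \frac{a_1 a_3}{a_2 a_4} \leq 0$ is dissipative; the parabolic term yields the Fisher-information quantity $\sum_i \kappa_i \int_{\T^d} |\nabla \sqrt{a_i}|^2 dx$; and the It\^o correction $\frac12 \sum_{i,\alpha} |\sigma_i^\alpha(a)|^2 / a_i$ is bounded via \eqref{growth-sigma-alpha2}, so that it can be absorbed into the Fisher information provided $\nu$ is sufficiently small: this is the origin of the constant $C_B$ in \eqref{smallnoise}. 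Second, upgrade the entropy estimate to a genuine $L^2 \ln L^2$-bound by the duality method of Pierre: in the deterministic setting one tests the forward equation against the solution of a backward heat equation with a suitable source, whereas here, following \cite{DebusscheRoselloVovelle2021}, one introduces a backward SPDE of the form
\begin{equation*}
-du = (\kappa_i \Delta u + g(a)) dt - Z^\alpha dB_\alpha(t), \qquad u(T) = 0,
\end{equation*}
pairs it with \eqref{weakFormulation} via It\^o's product rule, and exploits the algebraic identities $\sum_i f_i(a) = 0$ and $f_1(a) + f_2(a) = 0$ to dispose of the quadratic reaction term and transfer control onto the dual variable.

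With the $L^2 \ln L^2$-bound at hand, carry out the De Giorgi iteration. At threshold $k$, apply It\^o's formula to the truncated energy $\|(a_i - k)_+(t)\|_{L^2}^2$, use \eqref{growth-sigma-alpha2} once more to absorb the martingale correction into the dissipation via \eqref{smallnoise}, and deduce for the quantity
\begin{equation*}
U_k := \E \sup_{t \leq T} \|(a - k)_+(t)\|_{L^2}^2 + \E \int_0^T \|\nabla (a - k)_+\|_{L^2}^2 \, dt
\end{equation*}
a nonlinear recursion $U_k \leq C(k+1)^\gamma U_{k_0}^\beta$ with $\beta > 1$, the gain factor stemming from Sobolev embedding in dimensions $d = 1, 2$ combined with the smallness in measure of $\{a_i > k\}$, itself controlled by the $L^2 \ln L^2$-estimate. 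A standard De Giorgi lemma then yields $U_k = 0$ for $k$ sufficiently large, which is the uniform $L^\infty$-bound sought. The principal technical hurdle lies in the stochastic adaptation of the duality step: one must construct a solution of the backward SPDE having enough regularity and integrability to legitimately be paired with the forward equation \eqref{weakFormulation}, and carefully account for the extra covariation terms produced by It\^o's product rule, which interact with the quadratic reaction structure in a more delicate way than in the deterministic setting.
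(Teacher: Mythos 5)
Your overall architecture (truncated approximations $a^{[n]}$, $L^1\ln L^1$ entropy estimate, $L^2\ln L^2$ by duality with a backward SPDE, De Giorgi iteration, then $\hat\tau_n\to\infty$) matches the paper's. But there is a genuine gap at the decisive step. You claim that the De Giorgi recursion closes into a \emph{uniform} bound $\|a^{[n]}\|_{L^\infty(Q_T)}\leq M$ (so that $\hat\tau_n\geq T$ deterministically for $n>M$). In the stochastic setting this is exactly what fails: at each truncation level the entropy inequality carries a martingale term $\mathcal{M}(t;\xi_{k+1})^*$, and a recursion of the form $U_{k+1}\leq C^k U_k^{\beta}$ with $U_k=\E[\,\cdot\,]$ does not close, because after Burkholder--Davis--Gundy the quadratic variation is controlled by $\mathcal{U}_k^{3}$ and $\E[\mathcal{U}_k^{3/2}]$ is not a function of $\E[\mathcal{U}_k]$; iterating produces ever higher moments. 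The paper instead runs the iteration on \emph{events} $H_k=\{\mathcal{U}_k\leq\delta^{\underline\rho^k}\}$, controls $\PP(H_k\setminus H_{k+1})$ by an exponential martingale inequality, and obtains only the tail estimate $\PP\bigl(\sup_i\|a_i\|_{L^\infty(Q_T)}>\xi\bigr)\lesssim (\ln(1+\xi))^{-1/2}+\dots$, i.e.\ $\E[\Theta(\mathtt{B}_T)]<\infty$ with $\Theta(u)=\ln(1+\ln(1+u))$. That logarithmic tail bound is still enough to conclude, since $\PP(\tau_n\leq T)\leq C_\infty/\Theta(n)\to0$, but your deterministic conclusion ``$U_k=0$ for $k$ large'' is not available and the passage from the estimate to $\tau=+\infty$ must be probabilistic, via Markov's inequality on $\Theta(\mathtt{B}_T)$.

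Two further discrepancies are worth flagging. First, you locate the origin of the smallness constant $C_B$ in the absorption of the It\^o correction into the Fisher information in the entropy estimate; in the paper the entropy estimate holds for \emph{every} $\nu$ (the It\^o correction is dominated by the negative entropy-dissipation of the reaction term on the region where $a_1^*a_3^*$ and $a_2^*a_4^*$ are very unbalanced, plus the entropy itself elsewhere), and the smallness condition \eqref{smallnoise} enters only in the duality step, to absorb the term $\E\iint e^{-2C_1t}\sum_\alpha|g^\alpha|^2$, which is $\lesssim\nu\,\E\iint|\hat z|^2$, into the left-hand side of the $L^2$ estimate --- consistent with $C_B$ depending only on $d$ and the $\kappa_i$'s, not on $T$. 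Second, your iteration is on $\|(a_i-k)_+\|_{L^2}^2$; both \cite{GoudonVasseur10} and the paper truncate the \emph{entropy}, $\Phi\bigl((a_i-\xi)^+\bigr)$, and this is not cosmetic: with plain $L^2$ truncates the quadratic reaction tested against $(a_i-k)_+$ has no sign, whereas the logarithmic multiplier $\ln(a_i^{\xi*})$ reproduces the dissipative combination $-(a_1a_3-a_2a_4)\bigl(\ln(a_1^{\xi*}a_3^{\xi*})-\ln(a_2^{\xi*}a_4^{\xi*})\bigr)$ up to controllable errors. Relatedly, the backward dual equation should carry a single scalar diffusion coefficient $\tilde K$, a regularized convex combination $\sum_i\kappa_i\bar\Phi(a_i)/z$, since the duality is performed on $z=\sum_i\bar\Phi(a_i)$ rather than component by component.
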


In the deterministic case, the existence of global-in-time regular solutions to quadratic systems of reaction-diffusion equation has been established by various authors. Souplet, \cite{Souplet2018}, and Fellner, Morgan, Tang, \cite{FellnerMorganTang2020}, use the maximum principle for certain auxiliary equations, together with regularity estimates for scalar parabolic equations. This approach seems difficult to adapt to stochastic equations (when multiplicative noise is considered at least, the ``Da Prato - Debussche trick'' allowing to treat additive noise). De Giorgi's iteration scheme, based on truncates of the entropy, is shown to be efffective (for providing $L^\infty$-bounds almost-everywhere) in dimension $d\leq 2$ by Goudon and Vasseur, \cite{GoudonVasseur10}. The restriction on the dimension is related to the use of Sobolev's inequality in De Giorgi's iteration method. By adding several non-trivial ingredients (blow-up argument, estimates for the mass equation in particular), Caputo Goudon and Vasseur extend in \cite{CaputoGoudonVasseur2019} the result from dimension $d\leq 2$ to any space dimension. Here, and with the help of the $L^2\ln(L^2)$-estimate given in Section~\ref{sec:L2duality},we show that  the approach of \cite{GoudonVasseur10} is effective, under the same constraint $d\leq 2$.  \medskip

There are few works on systems of reaction-diffusion equations with stochastic forcing in the literature. Let us mention 
\cite{Kunze2015,DhariwalJuengelZamponi2019,BendahmaneKarlsen2022,DuYeZhang2023} and \cite{HausenblasRandrianasoloThalhammer20} however. The papers \cite{Kunze2015,DhariwalJuengelZamponi2019,BendahmaneKarlsen2022,DuYeZhang2023} are related to our work, but relatively different in nature: the emphasis is more on the existence of weak solutions (the problem of uniqueness is addressed in \cite{Kunze2015} also). In \cite{HausenblasRandrianasoloThalhammer20} a stochastic perturbation of the $2\times 2$ Gray-Scott model is considered on the Torus in dimension $d\leq 3$, and solutions in a Sobolev space that injects in $L^\infty$ are obtained.
However, one of the two equations in the Gray-Scott model has a ``good'' non-linear reaction term, insofar as it is non-positive (when the solutions stay in the class of solutions with non-negative components). This allows to start with an a priori estimate for the good component, a situation which has no counterpart in the case of the quadratic system \eqref{SS}.\medskip

Our method of proof uses a $L^2\ln(L^2)$ estimate as in \cite{DesvillettesFellnerPierreVovelle07}. An argument of duality is used to establish this $L^2\ln(L^2)$ estimate. In our stochastic context, we have to consider a backward SPDE as dual equation. Such a duality approach via BSPDE was already exploited in \cite{DebusscheRoselloVovelle2021}.\medskip

The $L^\infty$ bound on solutions is obtained by the method of De Giorgi, via truncation in the entropy inequality, as performed in \cite{GoudonVasseur10}. In the companion paper \cite{LeocataVovelle2023}, this method is analysed in the framework of linear parabolic equations. An alternative proof by duality of supremum estimates in this context is also given, and we still refer to this paper for references about supremum estimates for parabolic stochastic equations.\medskip

We conclude this paragraph with some details on the structure of the paper. The following section~\ref{subsec:modelling} accounts for the stochastic terms in the modelling of chemical reactions. The $L^1\ln(L^1)$ entropy estimate, Proposition~\ref{prop:entropy-estimate-quad-p}, and the $L^2\ln(L^2)$ estimate, Theorem~\ref{th:l2estim}, are derived in Section~\ref{sec:entropyestimate} and Section~\ref{sec:L2duality} respectively. The $L^\infty$ estimate on solutions, Theorem~\ref{th:Linfty-estimate}, is then obtained in Section~\ref{sec:LinftyRD}.

\subsection{Diffusion-approximation and modelling of the chemical reaction}\label{subsec:modelling}

In this section, we explain how an asymptotic expansion at the diffusive scale on the generator associated to the Markov description of the reaction \eqref{chemical} leads to \eqref{SS}. We will neglect the spatial displacements of the reactants in this discussion. 
\medskip

In a stochastic modelling of \eqref{chemical}, each reaction happens at a random time, given as an exponential random variable. The parameters of these exponential random variables, also called \emph{transition rates} of the reaction, are respectively of the form
\[
r_{\to}=\lambda_{\to}\frac{1}{N}N_1N_3,\quad r_{\leftarrow}=\lambda_{\leftarrow}\frac{1}{N}N_2N_4,
\]
where $N$ is the total number of reactants and $N_i$ the number of molecules of type $i$, \cite[p.454-455]{EthierKurtz86}. Note that $N$ stays constant in the evolution. We take it as main parameter and consider the situation where it is large. Using the approach\footnote{approach which is standard, see \cite{NziPardouxYeo2021,DebusscheNguepedjaNankep2021} for instance} of \cite[p.455]{EthierKurtz86}, we consider the Markov process $\hat A_N=(N_i(t))_{1\leq i\leq 4}$ with state space $\N^4$, given by
\begin{equation}\label{hatA}
\hat A_N(t)=\hat A_N(0)+\sum_{\ell} \ell E_\ell\left(\int_0^t N\eta_\ell\left(\frac{\hat A_N(s)}{N}\right)ds\right),
\end{equation}
In \eqref{hatA}, the sum is over the two following indices (where $*$ indicates transposition)
\begin{equation}\label{lto}
\ell_\to=( - 1 , 1 , - 1 , 1)^*,\quad \ell_\leftarrow=-\ell_\to,
\end{equation}
the processes $E_\ell$ are two independent unit Poisson processes and, given $a\in (\R_+)^4$, the functions $\eta_\ell(a)$ are defined by
\[
\eta_{\ell_\to}(a)=a_1a_3,\quad \eta_{\ell_\leftarrow}(a)=a_2a_4.
\]
The generator $\LL_N$ of the rescaled process $A_N(t)=N^{-1}\hat A_N(t)$ acts on functions $\varphi\colon(\R_+)^4\to\R$ and is given by
\[
\LL_N\varphi(a)=\sum_\ell N\eta_\ell(a)(\varphi(a+\ell N^{-1})-\varphi(a)).
\]
We introduce $f(a):=\sum_\ell \ell\eta_\ell(a)$ (it coincides with \eqref{fi-quadratic}) to expand $\LL_N\varphi(a)$ as follows
\begin{align}
\LL_N\varphi(a)&=f(a)\cdot D_a\varphi(a)+\sum_\ell N\eta_\ell(a)(\varphi(a+\ell N^{-1})-\varphi(a)-\ell N^{-1}D_a\varphi(a))\nonumber\\
&=f(a)\cdot D_a\varphi(a)+\frac{1}{N}\sum_\ell \eta_\ell(a)\ell\ell^* : D^2_a\varphi(a)+\mathcal{O}\left(\frac{1}{N^2}\right),\label{ExpandLLn}
\end{align}
where 
$A : B:=\sum_{i,j}A_{ij}B_{ij}$. At the order $0$, \eqref{ExpandLLn} gives the generator $f\cdot D_a\varphi$ associated to the ODE $\dot{a}=f(a)$. At order $1$, we obtain the generator 
\[
\LL_N^1\varphi(a)=f(a)\cdot D_a\varphi(a)+\frac{1}{N}\sum_\ell \eta_\ell(a)\ell\ell^* : D^2_a\varphi(a),
\]
associated to the SDE
\begin{equation}\label{SDEa}
da(t)=f(a(t))dt+\sqrt{\frac{2}{N}}\sum_\ell \ell [\eta_\ell(a(t))]^{1/2} dB_\ell(t),
\end{equation}
where the $(B_\ell)_\ell$ are independent one-dimensional Wiener processes. When the space variable is neglected, \eqref{SDEa} corresponds to \eqref{SS} where the sum over $\alpha$ involves two indexes, and 
\begin{equation}\label{A1234}
f(a)=(a_1a_3-a_2a_4)\ell,\quad \sigma_{1}(a)=\eps\sqrt{a_1a_3}\ell,\quad\sigma_2(a)=\eps\sqrt{a_2a_4}\ell,
\end{equation}
where $\eps$ is a small constant and $\ell=\ell_{\to}$, as given in \eqref{lto}.

\begin{remark}[Space-time white noise]\label{rk:spacetimewhite} When spatial displacement is taken into account, a pro\-ba\-bi\-lis\-tic description of the reaction \eqref{chemical} in terms of particles can be done, with an explicit expression for the associated generator (see \cite[Proposition~3.1]{LimLuNolen2020} for instance). A procedure of diffusion-approximation then leads to a stochastic system of reaction-diffusion equations with space-time white noises affecting both spatial derivatives and reaction terms, see\footnote{Note however that \cite{DeMasiFerrariLebowitz1986} gives a result in the vein of a central limit theorem, or of ``first-order approximation'', to use the term used in \cite{ChevallierOst2020} for instance. In this approach, the stochastic  equation obtained at the limit is linear (and involves the linearization of the zero-th order deterministic equation). In the framework of diffusion-approximation, where we use an expansion of the generator, and not of the solution, the stochastic equation obtained at the end is non-linear.} \cite[Theorem~3]{DeMasiFerrariLebowitz1986} for a similar (scalar) equation, describing the fluctuations of a Glauber-Kawasaki dynamics with a fast rate of exchanges. In the present study, we will limit ourselves to the consideration of coloured-in-space, time-white noise, the case of space-time white noise being singularly more difficult of course.
\end{remark}

\section{Entropy estimate}\label{sec:entropyestimate}


We use the notation $s^*=1+s$, $s\in[0,+\infty)$. Let us set
\begin{equation}\label{EntropyBinary1}
\Phi(s)=s^*\ln(s^*)-s^*+1=(1+s)\ln(1+s)-s,\quad\bar{\Phi}(s)=\Phi(s)+s=(1+s)\ln(1+s).
\end{equation}
Although $\Phi$ is the natural entropy functional, we need to introduce $\bar{\Phi}$, simply to ensure a control of quantities $\sim c_0 s$ with $c_0\not=0$ around $s=0$.

\begin{proposition}[Entropy estimate]\label{prop:entropy-estimate-quad-p} Suppose that the coefficients $\sigma_i^\alpha$ satisfy \eqref{cancel-sigma-alpha} and \eqref{growth-sigma-alpha2}. Assume also $a_{i0}\in C^\infty({\T^d})$, $a_{i0}\geq 0$, $i=1,\ldots,4$. Let $a$ be a global-in-time regular solution to \eqref{SS} or a global-in-time solution to \eqref{SS} with the truncated non-linearities defined in \eqref{truncfsigma}-\eqref{truncationTn}. Then $a$ satisfies the following estimates: for all $p\in[1,+\infty)$, for all $T>0$,
\begin{equation}\label{entropyestimatep}
\E\left[\left(\sum_{i=1}^4\int_{\T^d}\bar{\Phi}(a_i(t))dx+\E\sum_{i=1}^4\int_0^t\int_{\T^d}\kappa_i \frac{|\nabla_x a_i|^2}{1+a_i}dxds\right)^p\right]
\leq C(p)\E\left[\left(\sum_{i=1}^4\int_{\T^d}\bar{\Phi}(a_{i0})dx\right)^p\right],
\end{equation}
for all $t\in[0,T]$, where the constant $C(p)$ depends on $p$, $d$, on $\nu$, $T$ and on the constant $\underline{\kappa}$ in \eqref{positivekappa}.
\end{proposition}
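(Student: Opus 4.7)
My plan is to apply Itô's formula to the entropy functional
$E(t) := \sum_{i=1}^4 \int_{\T^d} \bar\Phi(a_i(t))\,dx$. Since $\bar\Phi''(s) = 1/(1+s)$, integration by parts on the diffusion term produces
\begin{equation*}
dE(t) + D'(t)\,dt = (R(t) + I(t))\,dt + dM(t),
\end{equation*}
with $D'(t) = \sum_i \kappa_i \int_{\T^d} |\nabla a_i|^2/(1+a_i)\,dx$ the entropy dissipation, $R(t) = \sum_i \int_{\T^d} \bar\Phi'(a_i) f_i(a)\,dx$ the reaction contribution, $I(t) = \tfrac{1}{2}\sum_i\int_{\T^d}\sum_\alpha|\sigma_i^\alpha|^2/(1+a_i)\,dx$ the Itô correction, and $M$ the stochastic integral against the Brownian motions.

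For the reaction, a direct rearrangement gives
\begin{equation*}
R = \int_{\T^d}(a_1 a_3 - a_2 a_4)\ln\frac{(1+a_2)(1+a_4)}{(1+a_1)(1+a_3)}\,dx.
\end{equation*}
Setting $V := (1+a_1)(1+a_3)$, $U := (1+a_2)(1+a_4)$ and using the identity $a_1 a_3 - a_2 a_4 = (V-U) - (a_1+a_3-a_2-a_4)$, I would split $R$ into a manifestly non-positive piece $-\int (V-U)\ln(V/U)\,dx \leq 0$ (the integrand is non-negative by monotonicity of $\ln$) plus a linear remainder. The remainder is controlled by $C(1+E)$ via the Orlicz--Young inequality $(1+s)\ln(1+t) \leq \bar\Phi(s) + t$ (a direct consequence of $\ln(1+x)\leq x$) together with the elementary pointwise bound $s \leq \bar\Phi(s)$ for $s\geq 0$.

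For the Itô correction, the growth condition \eqref{growth-sigma-alpha2} together with $\sum_i (1+a_i)^{-1} \leq 4$ reduces matters to estimating $\int(a_1 a_3 + a_2 a_4)\,dx$. In dimension $d \leq 2$, applying Gagliardo--Nirenberg to $u_i := \sqrt{1+a_i}$ (whose gradient satisfies $\|\nabla u_i\|_{L^2}^2 = \tfrac{1}{4}\int|\nabla a_i|^2/(1+a_i)\,dx$) yields $\|1+a_i\|_{L^2}^2 \leq C M_i (M_i + D_i')$, where $M_i := \int(1+a_i)\,dx \leq |\T^d| + E$ by the bound $s \leq \bar\Phi(s)$. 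This gives $I \leq C\nu(1+E)^2 + C\nu(1+E)D'/\underline\kappa$.

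To close the estimate, I would apply Itô's formula to $(E(t)+D(t))^p$ with $D(t) := \int_0^t D'(s)\,ds$, take expectation, and control the martingale contribution by Burkholder--Davis--Gundy after a similar Cauchy--Schwarz plus Gagliardo--Nirenberg bound on the quadratic variation $[M]'$. The main technical obstacle is the cross-term $(1+E)D'$ in the bound on $I$, which must be absorbed into the dissipation $-D'$ present in the evolution of $E$ via a carefully weighted Young inequality; the resulting Gronwall argument yields a constant with the stated dependence on $p, d, \nu, T, \underline\kappa$. All computations are first carried out on the truncated system \eqref{truncfsigma}--\eqref{truncationTn}, where pointwise bounds make the Itô calculus rigorous, and the estimate is inherited by the regular solution constructed as a limit of truncations.
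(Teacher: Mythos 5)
Your treatment of the reaction term matches the paper's: writing $a_1a_3-a_2a_4=(V-U)-(a_1+a_3-a_2-a_4)$ with $V=a_1^*a_3^*$, $U=a_2^*a_4^*$ isolates the non-positive dissipation $-(V-U)\ln(V/U)$ plus a remainder that is linear in the entropy. The genuine gap is in your handling of the It\^o correction. Having discarded the non-positive reaction piece, you estimate the quadratic quantity $\int(a_1a_3+a_2a_4)\,dx$ by Gagliardo--Nirenberg and arrive at
\begin{equation*}
I \;\leq\; C\nu(1+E)^2+\frac{C\nu}{\underline{\kappa}}(1+E)\,D'.
\end{equation*}
This cannot be closed. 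The cross term $(1+E)D'$ can be absorbed into the dissipation $-D'$ only if $C\nu(1+E)/\underline{\kappa}<1$, i.e.\ only if the entropy is a priori small; no weighted Young inequality helps, because the term is already a product of the two uncontrolled quantities, and the proposition assumes no smallness of $\nu$ or of $\mathcal{E}(0)$. The term $(1+E)^2$ is equally fatal: it turns the Gronwall inequality into a Riccati-type inequality whose upper solution blows up in finite time. A secondary defect is that the Gagliardo--Nirenberg step forces $d\leq 2$, whereas the proposition is stated (and later used) in arbitrary dimension, with $C(p)$ merely depending on $d$.

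The missing idea is that the It\^o correction must be cancelled \emph{pointwise in $a$} by the very reaction dissipation you threw away; this is the content of the paper's Lemma~\ref{lem:ControlSourceEntropy}. Up to remainders that are linear in $\sum_i\bar\Phi(a_i)$, one must show
\begin{equation*}
-(a_1^*a_3^*-a_2^*a_4^*)\bigl(\ln(a_1^*a_3^*)-\ln(a_2^*a_4^*)\bigr)+\nu\sum_{i=1}^4\frac{a_1^*a_3^*+a_2^*a_4^*}{a_i^*}\;\leq\; C\sum_{i=1}^4\bar{\Phi}(a_i),
\end{equation*}
and this is done by splitting $\R_+^4$ according to the ratio $a_2^*a_4^*/(a_1^*a_3^*)$. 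Where the ratio lies in $[K^{-1},K]$, the quotients $a_1^*a_3^*/a_2^*$ etc.\ are in fact \emph{linear} (bounded by $K\sum_i a_i^*$), so no quadratic term survives there. Where the ratio leaves $[K^{-1},K]$, the dissipation is bounded above by $-\psi(K^{-1})\,a_1^*a_3^*$ (resp.\ $-\psi(K^{-1})a_2^*a_4^*$) with $\psi(\alpha)=-(1-\alpha)\ln\alpha$ and $\psi(0+)=+\infty$, so choosing $K$ with $\psi(K^{-1})>2\nu$ the dissipation dominates the quadratic correction. This yields a bound on the full source by $C_1 z$ that is valid in every dimension and for every $\nu$ (only $C_1$ depends on $\nu$), after which the short-time Gronwall argument, the power $p$, BDG on the martingale, and iteration in time close the proof along the lines you sketch.
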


\begin{proof}[Proof of Proposition~\ref{prop:entropy-estimate-quad-p}] We will denote by $C_1,C_2,\ldots$ any constant depending on $d$, $\nu$ and $\underline{\kappa}$ only. We write $C_1(p),C_2(p),\ldots$ if there is also a dependence on $p$. We will establish \eqref{entropyestimatep} under the additional assumption that 
\begin{equation}\label{smallTp}
T\leq T_1(p),
\end{equation} 
for a positive time $T_1(p)$ depending on $p$, $d$ and $\nu$ (\textit{cf.} \eqref{growth-sigma-alpha2}). The general case will follow by iteration of this result. We consider first the case of a global-in-time regular solution $a$ of \eqref{SS}. The case of truncated non-linearities is explained at the end of the proof. \medskip

By the It\^o formula, we have for all $i$:
\begin{multline}\label{eqzi}
d\bar{\Phi}(a_i)-\nx\cd(\kappa_i\nx\bar{\Phi}(a_i))dt\\
= \left(\bar{\Phi}'(a_i)f_i(a)-\kappa_i\bar{\Phi}''_i(a_i)|\nabla_x a_i|^2+\frac12 \sum_\alpha|\sigma_i^\alpha(a_i)|^2\bar{\Phi}''_i(a_i)\right) dt
+\bar{\Phi}'_i(a_i)\sigma_i^\alpha(a)d B_\alpha(t).
\end{multline}
Then by integration in $x$, we obtain
\begin{multline}\label{eqziIntegrate}
\int_U \bar{\Phi}(a_i)(t)dx=\int_U \bar{\Phi}(a_i)(0)dx\\
+\int_0^t\int_U \left(\bar{\Phi}'(a_i)f_i(a)-\kappa_i\bar{\Phi}''_i(a_i)|\nabla_x a_i|^2+\frac12 \sum_\alpha|\sigma_i^\alpha(a_i)|^2\bar{\Phi}''_i(a_i)\right) dxds\\
+\int_0^t\int_U\bar{\Phi}'_i(a_i)\sigma_i^\alpha(a) dx d B_\alpha(s),
\end{multline}

\begin{lemma}[Control of the source terms]\label{lem:ControlSourceEntropy} There is a constant $C_1$ depending on $\nu$ only such that
\begin{equation}\label{eq:controlSource}
\sum_{i=1}^4 \left(\bar{\Phi}'(a_i)f_i(a)+\frac12 \sum_\alpha|\sigma_i^\alpha(a_i)|^2\bar{\Phi}''_i(a_i)\right)\leq C_1\sum_{i=1}^4 \bar{\Phi}(a_i),
\end{equation}
for all $a_1,\ldots,a_4\in\R_+$.
\end{lemma}

Summing on $i$ in \eqref{eqziIntegrate} and using the positivity condition \eqref{positivekappa} and the estimate \eqref{eq:controlSource}, we obtain
\begin{equation}\label{entropy1p}
\mathcal{E}(t)+\mathcal{D}(t)\leq C_1\int_0^t\mathcal{E}(s)ds+\mathcal{M}(t)+\mathcal{E}(0),
\end{equation}
where
\begin{equation}\label{anovEDp}
\mathcal{E}(t)=\sum_{i=1}^4\int_{\T^d}\bar{\Phi}(a_i(t))dx,\quad \mathcal{D}(t)=\E\sum_{i=1}^4\int_0^t\int_{\T^d} \frac{|\nabla a_i^\xi|^2}{a_i^{\xi*}}dxds,
\end{equation}
and $\mathcal{M}(t)$ denotes the martingale
\begin{equation}\label{noavMp}
\mathcal{M}(t)=\int_0^t\int_{\T^d} \sum_{\alpha,i}\sigma_i^\alpha(a_i)\left(1+\ln(a^{*}_i)\right) dx d B_\alpha(s).
\end{equation}
Our aim is to get the following estimate:
\begin{equation}\label{entropyestimatep-bis}
\E\left[\mathcal{U}(t)^p\right]\leq C(p) \E\left[\mathcal{E}(0)^p\right],\quad \mathcal{U}(t):=\sup_{0\leq s\leq t}\mathcal{E}(s)+\mathcal{D}(t),
\end{equation}
for $0\leq t\leq T_1(p).$ The bound 
\begin{equation}
C_1\int_0^t\mathcal{E}(s)ds\leq C_1 t\, \mathcal{U}(t)
\end{equation}
inserted in \eqref{entropy1p} gives us
\begin{equation}\label{entropyestimatep1}
\mathcal{U}(t)\leq 2\mathcal{E}(0)+2\mathcal{M}(t)^*,
\end{equation}
where $\mathcal{M}(t)^*=\sup_{s\in[0,t]}|\mathcal{M}(s)|$, for $0\leq t\leq T_1(p)$, provided $C_1 T_1(p)\leq \frac12$. Raising \eqref{entropyestimatep1} to the power $p$ yields
\begin{equation}\label{entropyestimatep2}
\mathcal{U}(t)^p\leq C_2(p)\left[\mathcal{E}(0)^p+\left(\mathcal{M}(t)^*\right)^p\right].
\end{equation}
We take expectation in \eqref{entropyestimatep2} and use the Burkholder-Davis-Gundy inequality to get
\begin{equation}\label{entropyestimatep3}
\E\left[\mathcal{U}(t)^p\right]\leq C_3(p)\left(\E\left[\mathcal{E}(0)^p\right]+\E\left[\dual{\mathcal{M}}{\mathcal{M}}_t^\frac{p}{2}	\right]\right).
\end{equation}
The quadratic variation of $\mathcal{M}(t)$ is
\begin{equation}\label{QvarMt}
\dual{\mathcal{M}}{\mathcal{M}}_t=\int_0^t\sum_\alpha\left|\int_{\T^d} \sum_{i}\sigma_i^\alpha(a)\left(1+\ln(a^*_i)\right) dx\right|^2 ds.
\end{equation}
Set $\mathbf{F}=a_1a_3+a_2a_4$. By the Cauchy-Schwarz inequality and \eqref{growth-sigma-alpha2}, we have
\begin{align}
\dual{\mathcal{M}}{\mathcal{M}}_t&\leq\int_0^t\left[\int_{\T^d} \sum_{\alpha,i}\mathbf{F}^{-1/2}|\sigma_i^\alpha(a)|^2\left(1+\ln(a^*_i)\right) dx 
\int_{\T^d} \sum_i\mathbf{F}^{1/2}\left(1+\ln(a^*_i)\right)dx\right]ds\nonumber\\
&\leq \nu\int_0^t\left|\int_{\T^d} \sum_i\mathbf{F}^{1/2}\left(1+\ln(a^*_i)\right)dx\right|^2 ds
\leq C_4 \int_0^t\left|\int_{\T^d} \sum_{i,j}a_i\left(1+\ln(a^*_j)\right)dx\right|^2 ds.\label{QvarMt2}
\end{align}
Observe that we have the bound $a\leq\bar{\Phi}(a)$ and
\begin{equation}\label{aiaj}
a_i^*\ln(a_j^*)\leq a_i^*\ln(a_i^*)+a_j^*\ln(a_j^*),
\end{equation}
which, exploited in \eqref{QvarMt2}, yields
\begin{equation}\label{QvarMt3}
\dual{\mathcal{M}}{\mathcal{M}}_t \leq C_5\int_0^t\left|\int_{\T^d} \sum_{i} \bar{\Phi}(a_i) dx\right|^2 ds\leq C_5 t\, \sup_{0\leq s\leq t}\mathcal{E}(s)^2\leq C_5 t\,\mathcal{U}(t)^2.
\end{equation}
We insert this last estimate \eqref{QvarMt3} in \eqref{entropyestimatep3} to obtain
\begin{equation}\label{entropyestimatep4}
\E\left[\mathcal{U}(t)^p\right]\leq C_6(p)\left(\E\left[\mathcal{E}(0)^p\right]+t^{\frac{p}{2}}\E\left[\mathcal{U}(t)^p\right] \right).
\end{equation}
The desired bound \eqref{entropyestimatep-bis} follows, under the condition $t\leq T_1(p)$.


\begin{proof}[Proof of Lemma~\ref{lem:ControlSourceEntropy}] Our aim is to control
\begin{equation}\label{introS}
S=-(a_1a_3-a_2a_4)\left(\ln(a_1^{*}a_3^{*})-\ln(a_2^{*} a_4^{*})\right)+\frac12 \sum_{\alpha,i}\frac{|\sigma_i^\alpha(a_i)|^2}{a_i^{*}}.
\end{equation}
By the growth condition \eqref{growth-sigma-alpha2}, we have 
\begin{equation}\label{estimateES}
S\leq 
-(a_1^*a_3^*-a_2^*a_4^*)\left(\ln(a_1^*a_3^*)-\ln(a_2^* a_4^*)\right)+\nu\sum_{i=1}^4\frac{a_1^*a_3^*+a_2^*a_4^*}{a_i^*}
+\mathtt{e}_1,
\end{equation}
where
\begin{equation}\label{remainderES}
\mathtt{e}_1= (a_1^*+a_3^*-a_2^*-a_4^*)\left(\ln(a_1^*a_3^*)-\ln(a_2^* a_4^*)\right).
\end{equation}
We use the inequality \eqref{aiaj} to obtain the bound
\begin{equation}\label{bounde1}
\mathtt{e}_1\leq C_2\sum_{i=1}^4\bar{\Phi}(a_i).
\end{equation}
We also have
\begin{equation}\label{boundCorrecIto0}
\sum_{i=1}^4\frac{a_1^*a_3^*+a_2^*a_4^*}{a_i^*} \leq C_3 \sum_{i=1}^4(\Phi(a_i)+a_i)
+ \left(\frac{a_1^*a_3^*}{a_2^*}+\frac{a_1^*a_3^*}{a_4^*}+\frac{a_2^*a_4^*}{a_1^*}+\frac{a_2^*a_4^*}{a_3^*}\right),
\end{equation}
so
\begin{equation}\label{estimateES2}
S \leq \Theta(a) 
+\mathtt{e}_2,
\end{equation}
where $\mathtt{e}_2$ satisfies the same bound \eqref{bounde1} as $\mathtt{e}_1$ and where
\begin{equation}\label{integrandSS}
\Theta(a)=-(a_1^*a_3^*-a_2^*a_4^*)\left(\ln(a_1^*a_3^*)-\ln(a_2^* a_4^*)\right)
+\nu\left[\frac{a_1^*a_3^*}{a_2^*}+\frac{a_1^*a_3^*}{a_4^*}+\frac{a_2^*a_4^*}{a_1^*}+\frac{a_2^*a_4^*}{a_3^*}\right].
\end{equation}
We will conclude the proof by showing that
\begin{equation}\label{Theta-by-E}
\Theta(a)\leq C_4\sum_{i=1}^4\bar{\Phi}(a_i).
\end{equation}
Let $K>1$ denote a constant that will be fixed later. Let us examine $\Theta(a)$ in the three regions
\begin{equation}
\Upsilon_+=\{a_2^*a_4^*>Ka_1^*a_3^*\},\quad \Upsilon_0=\{Ka_1^*a_3^*\geq a_2^* a_4^*\geq K^{-1}a_1^*a_3^*\},\quad \Upsilon_-=\{a_1^*a_3^*>K a_2^*a_4^*\}.
\end{equation}
For symmetry reasons, it is sufficient to examine the last two regions $\Upsilon_0$, $\Upsilon_-$. In $\Upsilon_0$, we simply use the sign of the entropy dissipation term,
\begin{equation}
-(a_1^*a_3^*-a_2^*a_4^*)(\ln(a_1^*a_3^*)-\ln(a_2^*a_4^*))\leq 0,
\end{equation}
and the bound
\begin{equation}
\frac{a_1^*a_3^*}{a_2^*}+\frac{a_1^*a_3^*}{a_4^*}+\frac{a_2^*a_4^*}{a_1^*}+\frac{a_2^*a_4^*}{a_3^*}
\leq K\left(a_4^*+a_2^*+a_3^*+a_1^*\right).
\end{equation}
In $\Upsilon_-$, we can estimate $\Theta(a)$ from above as follows: let us write $a_2^*a_4^*=\alpha a_1^*a_3^*$ with $\alpha<K^{-1}$. Then we obtain
\begin{equation}
(a_1^*a_3^*-a_2^*a_4^*)(\ln(a_1^*a_3^*)-\ln(a_2^*a_4^*))= a_1^*a_3^*\psi(\alpha),
\end{equation}
where $\psi(\alpha)=-(1-\alpha)\ln(\alpha)$ is positive decreasing on $(0,1]$ with $\psi(0+)=+\infty$. In particular, $\psi(\alpha)>\psi(K^{-1})$ and thus
\begin{equation}
-(a_1^*a_3^*-a_2^*a_4^*)(\ln(a_1^*a_3^*)-\ln(a_2^*a_4^*))\leq -a_1^*a_3^*\psi(K^{-1}).
\end{equation}
The remaining part of $\Theta(a)$ is bounded by
\begin{equation}
\frac{a_1^*a_3^*}{a_2^*}+\frac{a_1^*a_3^*}{a_4^*}+K^{-1}a_3^*+K^{-1}a_1^*
\leq 2a_1^*a_3^*+K^{-1}a_3^*+K^{-1}a_1^*.
\end{equation}
We can conclude to \eqref{Theta-by-E} in $\Upsilon_-$ if $2\nu<\psi(K^{-1})$, which is always satisfied for $K$ large enough since $\psi(0+)=+\infty$. 
\end{proof}
To conclude the proof of Proposition~\ref{prop:entropy-estimate-quad-p}, we still have to explain why it remains true when the non-linearities $f^n$ and $\sigma^n$ defined in \eqref{truncfsigma} are considered. This amounts to justify the validity of Lemma~\ref{lem:ControlSourceEntropy} in this case (with a constant independent on $n$). Consider thus
\begin{equation}\label{introSn}
S_n=f^n_1(a)\left(\ln(a_1^{*}a_3^{*})-\ln(a_2^{*} a_4^{*})\right)+\frac12 \sum_{\alpha,i}\frac{|\sigma^n_{\alpha,i}(a_i)|^2}{a_i^{*}},
\end{equation}
which is analogous to the quantity $S$ defined in \eqref{introS}. Actually, by our choice of truncation in \eqref{truncfsigma}, $S_n$ is proportional to $S$:
\begin{equation}\label{SnS}
S_n=|\chi_n(|a|)|^2 S\leq S,
\end{equation}
so the results follows at once.
\end{proof}

\section{An \texorpdfstring{$L^2\ln(L^2)$}{}-estimate by duality}\label{sec:L2duality}

Let $\Phi$, $\bar{\Phi}$ be defined by \eqref{EntropyBinary1}. Set
\begin{equation}\label{L2E}
\mathcal{E}_2(t):=\int_{\T^d}\sum_i|\bar{\Phi}(a_{i}(t))|^2 dx.
\end{equation}
Assume that
\begin{equation}\label{E20}
\mathtt{E}_2(0)=\E\left[\mathcal{E}_2(0)\right]<+\infty.
\end{equation}
 The main result of this section shows that the initial $L^2\ln(L^2)$-bound~\eqref{E20} is propagated in a bound
\begin{equation}\label{L2EQ_T}
\E\left[\int_0^T \mathcal{E}_2(t)ds\right]<+\infty.
\end{equation}
This quadratic estimate is obtained by duality, by considering an appropriate backward parabolic SPDE (see \eqref{eqwq}). This is an extension to the stochastic framework of the duality method developed in \cite{DesvillettesFellnerPierreVovelle07} (see also \cite{Pierre10}).

\begin{theorem}[$L^2\ln(L^2)$-estimate]\label{th:l2estim} Suppose that the coefficients $\sigma_i^\alpha$ satisfy \eqref{cancel-sigma-alpha} and \eqref{growth-sigma-alpha2}. Assume also $a_{i0}\in C^\infty({\T^d})$, $a_{i0}\geq 0$, $i=1,\ldots,4$. Let $a$ be a global-in-time regular solution to \eqref{SS} or a global-in-time solution to \eqref{SS} with the truncated non-linearities defined in \eqref{truncfsigma}-\eqref{truncationTn}. Then, under the smallness condition~\eqref{smallnoise}, $a$ satisfies the following estimate: for all $T>0$,
\begin{equation}\label{mainestim}
\E\left[\int_0^T \mathcal{E}_2(t)ds\right]\leq C e^{2C_1 T}\mathtt{E}_2(0),
\end{equation}
where the constant $C$ depends on $d$, $\min_{1\leq i\leq 4}\kappa_i$, $\max_{1\leq i\leq 4}\kappa_i$ only, and $C_1$ is the constant in introduced in Lemma~\ref{lem:ControlSourceEntropy}.
\end{theorem}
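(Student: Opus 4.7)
The plan is a duality argument in the spirit of \cite{DesvillettesFellnerPierreVovelle07,Pierre10}, with a backward SPDE playing the role of the dual test equation. The first step is to aggregate the entropies: set $W:=\sum_{i=1}^4\bar{\Phi}(a_i)$ and $\mathcal{A}:=W^{-1}\sum_i\kappa_i\bar{\Phi}(a_i)$, which satisfies $\underline\kappa\le\mathcal{A}\le\max_i\kappa_i$. Summing \eqref{eqzi} over $i$, dropping the non-positive dissipation $-\kappa_i\bar{\Phi}''(a_i)|\nabla a_i|^2$ and invoking Lemma~\ref{lem:ControlSourceEntropy} to control the reaction and It\^o-correction sources, one obtains the stochastic supersolution inequality
\begin{equation*}
dW \,\le\, \Delta(\mathcal{A}W)\,dt + C_1 W\,dt + \sum_\alpha G^\alpha\,dB_\alpha, \qquad G^\alpha := \sum_i \bar{\Phi}'(a_i)\sigma_i^\alpha.
\end{equation*}
The zero-order term $C_1 W$ is eliminated by passing to $V(t):=e^{-C_1 t}W(t)$, which satisfies $dV\le\Delta(\mathcal{A}V)\,dt + \sum_\alpha e^{-C_1 t}G^\alpha\,dB_\alpha$. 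A key pointwise algebraic bound $\sum_\alpha|G^\alpha|^2 \le C\nu\,W^2$ is proved via Cauchy--Schwarz, the growth hypothesis \eqref{growth-sigma-alpha2}, the monotonicity of $\bar{\Phi}'$, and the elementary but non-trivial inequality $\bar{\Phi}'(a)^2 a^2\le C\bar{\Phi}(a)^2$, which holds uniformly on $[0,\infty)$.

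For the duality step, given a non-negative predictable $\theta\in L^2(\Omega\times Q_T)$ with $\|\theta\|_{L^2}\le 1$, I would introduce the backward SPDE (this is the equation~\eqref{eqwq})
\begin{equation*}
-dw = (\mathcal{A}\Delta w + \theta)\,dt - \sum_\alpha Z^\alpha\,dB_\alpha,\qquad w(T)=0,
\end{equation*}
whose unknown is an adapted pair $(w,Z)$. BSPDE theory with bounded random coefficients, as in \cite{DebusscheRoselloVovelle2021}, delivers a unique solution with $w\ge 0$ (comparison principle, since $\theta\ge 0$ and $w(T)=0$) and the maximal-regularity bound $\E\|w(0)\|_{L^2}^2 + \E\int_0^T\sum_\alpha\|Z^\alpha\|_{L^2}^2\,dt \le C\|\theta\|_{L^2(\Omega\times Q_T)}^2$ with $C$ depending only on $d,\underline\kappa,\max_i\kappa_i,T$. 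Applying It\^o's product rule to $\int_{\T^d} V(t)w(t)\,dx$, two integrations by parts force the contributions of $\Delta(\mathcal{A}V)$ and of $\mathcal{A}\Delta w$ to cancel exactly, because $\mathcal{A}$ is scalar. Using $w(T)=0$ and $w\ge 0$ to preserve the supersolution inequality, and taking expectation, gives
\begin{equation*}
\E\int_0^T\!\!\int_{\T^d} V\theta\,dxdt \,\le\, \E\int_{\T^d} V(0)w(0)\,dx + \E\int_0^T\!\!\int_{\T^d}\sum_\alpha e^{-C_1 t}G^\alpha Z^\alpha\,dxdt.
\end{equation*}
Cauchy--Schwarz and the BSPDE estimate bound the first term on the right by $C\,\mathtt{E}_2(0)^{1/2}\|\theta\|_{L^2}$, while Step~1's pointwise bound controls the noise cross-term by $C\sqrt{\nu}\,\|V\|_{L^2(\Omega\times Q_T)}\|\theta\|_{L^2}$.

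Since $V\ge 0$, taking the supremum over admissible $\theta$ recovers $\|V\|_{L^2(\Omega\times Q_T)}$ on the left, yielding $\|V\|_{L^2}\le C\,\mathtt{E}_2(0)^{1/2} + C\sqrt{\nu}\,\|V\|_{L^2}$. The smallness condition \eqref{smallnoise}, with $C_B$ chosen as (twice) the square of the cross-term constant, absorbs the last term and gives $\|V\|_{L^2(\Omega\times Q_T)}^2\le C\,\mathtt{E}_2(0)$. Reverting $W=e^{C_1 t}V$ introduces the announced factor $e^{2C_1 T}$, and since the aggregation inequality $\mathcal{E}_2(t)\le\|W(t)\|_{L^2(\T^d)}^2$ holds pointwise in $(\omega,t)$, the bound \eqref{mainestim} follows. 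The main obstacle is the noise cross-term: it has exactly the same $L^2$-scaling as the quantity being estimated, so absorption on the left is possible only under \eqref{smallnoise}, which is therefore tight for this argument. The secondary technical difficulty is the BSPDE existence and maximal regularity with bounded random diffusion, delegated to the BSPDE machinery of \cite{DebusscheRoselloVovelle2021}.
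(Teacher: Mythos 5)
Your proposal is correct and follows essentially the same route as the paper: aggregate the entropies into $z=\sum_i\bar\Phi(a_i)$ with the convex-combination diffusion coefficient, dualize against a backward SPDE with non-negative data, bound $\sum_\alpha|g^\alpha|^2\lesssim\nu z^2$, and absorb the noise cross-term using \eqref{smallnoise} (your exponential conjugation $V=e^{-C_1t}W$ is just the paper's zero-order term $C_1w$ in \eqref{eqwq} and its $e^{2C_1t}$-weighted estimates in disguise). The only step you elide is that the BSPDE well-posedness and maximal regularity invoked from \cite{DuTang2012} require smooth (not merely bounded measurable) leading coefficients, so the paper must replace $K$ by the regularization $\tilde K_\eps=\tfrac{z}{z+\eps}K$ and control the resulting commutator $\E\iint_{Q_T}\Delta w\,(K-\tilde K_\eps)z\,dx\,dt$ via $(K-\tilde K_\eps)z\le\eps$ before passing to the limit.
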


\begin{proof}[Proof of Theorem~\ref{th:l2estim}] The proof breaks into several steps. 

\paragraph{Step 1.} We start from the equation~\eqref{eqzi} and use \eqref{eq:controlSource} to obtain
\begin{equation}\label{eqzi2}
dz-\nx\cd\left(\sum_i\kappa_i\nx z_i\right)dt
=  F dt
+\sum_i \ln(a_i)\sigma_i^\alpha(a)d B_\alpha(t),
\end{equation}
where
\begin{equation}\label{defzzz}
z=\sum_i\bar{\Phi}(a_i),
\end{equation}
and where, for all $x\in {\T^d}$, $(F(x,t))$ is an adapted process, satisfying $F(x,t)\leq C_1 z(x,t)$ a.s., for all $(x,t)\in Q_T$. We rewrite \eqref{eqzi2} as
\begin{equation}\label{eqzi3}
dz-\Delta(K z)dt
=  F dt
+g^\alpha d B_\alpha(t),
\end{equation}
where
\begin{equation}\label{galpha}
g^\alpha=\sum_i (1+\ln(a_i))\sigma_i^\alpha(a),
\end{equation}
and where the coefficient $K$ is given\footnote{if $a(x,t)=0$, we set $K(x,t)=\underline{\kappa}$} as the convex combination
\begin{equation}\label{defK}
K(x,t)=\sum_i\frac{\bar{\Phi}(a_i(x,t))}{z(t,x)} \kappa_i\in\left[\min_{1\leq i\leq 4}\kappa_i,\max_{1\leq i\leq 4}\kappa_i\right].
\end{equation}
Thus $K$ is measurable in $(x,t)$ and bounded, with $K(x,t)\geq\underline{\kappa}$ for all $(x,t)\in Q_T$, $\PP$-a.s. by \eqref{positivekappa}, and for all $x\in {\T^d}$, $(K(x,t))$ is adapted.  Next, we introduce the solution $(w,(q^\alpha))$ to the backward SPDE 
\begin{equation}\label{eqwq}
dw(t)+(\tilde{K}(t)\Delta w(t)+C_1 w(t))dt=-H(t) dt+q^\alpha(t) d B_\alpha(t),
\end{equation}
with terminal condition 
\begin{equation}\label{TCw}
w(x,T)=0,\quad x\in {\T^d}.
\end{equation}
Let $\mathcal{P}$ denote the predictable $\sigma$-algebra of $\Omega\times[0,T]$. In \eqref{eqwq}, we assume that 
\begin{equation}\label{wHK}
\tilde{K},\, H\colon \Omega\times[0,T]\times {\T^d}\to\R
\end{equation}
are $\mathcal{P}\times\mathcal{B}({\T^d})$-measurable, and that $\PP$-a.s., for all $(x,t)\in Q_T$,
\begin{equation}\label{posHK}
\underline{\kappa}\leq\tilde{K}(x,t)\leq\max_{1\leq i\leq 4}\kappa_i,\quad 0\leq H(x,t),
\end{equation}
and that there exists an integer $q>2+d/2$, a constant $C_{\tilde{H},K}\geq 0$ such that $\PP$-a.s., for all $(x,t)\in Q_T$, for all multi-index $m$ of length $|m|<q$
\begin{equation}\label{regHK}
|D^m_x\tilde{K}(x,t)|+|D^m_x H(x,t)|\leq C_{\tilde{H},K}.
\end{equation}
Note in particular that there is a modulus $\gamma\colon [0,+\infty)\to[0,+\infty)$ (\textit{i.e.} a continuous and increasing $\gamma$ with $\gamma(r)=0$ if, and only if $r=0$) such that $\PP$-a.s., for all $t\in[0,T]$, for all $x,y\in {\T^d}$,
\begin{equation}\label{modulusK}
|\tilde{K}(t,x)-\tilde{K}(t,y)|\leq\gamma(|x-y|).
\end{equation}
By \cite{DuTang2012} (see Remark~3.1 and Corollary~3.4), the problem \eqref{eqwq}-\eqref{TCw} admits a solution $(w,(q^\alpha))$ in the following sense: 
\begin{enumerate}
\item $(w,(q^\alpha))$ satisfies the interior regularity
\begin{equation}\label{wqIntReg}
w\in L^2(\Omega\times(0,T),\mathcal{P},C^2(\T^d))\cap L^2(\Omega,C(\bar{Q}_T)),\quad q^\alpha\in L^2(\Omega\times(0,T),\mathcal{P},C^1(\T^d)),
\end{equation}
with the bound
\begin{equation}\label{l2qalpha}
\E\int_0^T \sum_\alpha |q^\alpha(x,t)|^2 dt<+\infty,
\end{equation}
for all $x\in\T^d$, 
\item the equation \eqref{eqwq} is satisfied point-wise, for every $x\in\T^d$,
\item $(w,(q^\alpha))$ satisfies the bound
\begin{multline}\label{Sobolevwqalpha}
\E\int_0^T\left(\|w(t)\|_{H^2(\T^d)}^2+\sum_{\alpha}\|q(t)\|_{H^1(\T^d)}^2\right)dt
+\E\left[\sup_{t\in[0,T]}\|w(t)\|_{H^1(\T^d)}^2\right]\\
\leq C_0\E\int_0^T\|H\|_{L^2(\T^d)}^2 dt,
\end{multline}
where the constant $C_0$ depends on $d$, $T$ and on the modulus $\gamma$ in \eqref{modulusK}.
\end{enumerate}
Note that, in \cite{DuTang2012} this is the Homogeneous Dirichlet problem that is considered, but the adaptation to the case of periodic boundary conditions is straightforward. We will justify in Step~2 the It\^o formula, which, based on the equations satisfied by $z$ and $w$, gives 
\begin{multline}\label{Itozw}
\E\left[\iint_{Q_T} H z dx dt\right]
=
\E\left[\int_{\T^d} z_0 w(0) dx\right]
+\E\left[\iint_{Q_T}\Delta w (K-\tilde{K})z dxdt\right]\\
+\E\left[\iint_{Q_T}  (F-C_1z)w dx dt\right]
+\E\left[\iint_{Q_T} g^\alpha q^\alpha dx dt\right].
\end{multline}
In Step~3, we show that $\PP$-a.s., $w(x,t)\geq 0$ for all $(x,t)\in Q_T$.  In Step~4, we establish the following bounds on $(w,(q^\alpha))$:
\begin{multline}\label{boundwq}
\E\left[\|w(0)\|_{H^1({\T^d})}^2\right]
+\E\int_0^T e^{2C_1 t}\left[\|w(t)\|_{H^2({\T^d})}^2
+\sum_\alpha\|q^\alpha\|_{H^1({\T^d})}^2\right]dt\\
\leq 
R_1\E\int_0^T e^{2C_1 t}\|H(t)\|_{L^2({\T^d})}^2 dt,
\end{multline}
where the constant $R_1$ depends on $d$, $\underline{\kappa}$ and $\max_{1\leq i\leq 4}\kappa_i$ only. In the last, fifth step, we consider the limit of \eqref{Itozw} when $\tilde{K}=K_\eps$ is a regularization of $K$, and conclude our argument.

\paragraph{Step 2.} We need to justify the It\^o formula for the product $\dual{z(t)}{w(t)}_{L^2({\T^d})}$: at least three possible approaches seem possible in our situation:
\begin{enumerate}
\item use the It\^o formula for the square of the $L^2$-norm (see \cite{KrylovRozovskii1979} for instance) of $w$, $z$, $w+z$,
\item consider the equations at fixed $x$ (this is possible since we work with regular enough solutions), use the It\^o formula for real-valued processes, and integrate the result over $\T^d$,
\item use a spectral decomposition of the processes, apply the It\^o formula for real-valued processes, and gather the results.
\end{enumerate}
This is the last method that we will employ since, even if regularity of solutions is available in our case, it is less demanding from that point of view than the second approach (and as explained in Remark~\ref{rk:DirBC}, solutions with less regularity may have to be considered if different boundary conditions are assumed). So let $(v_n)$ denote the Fourier basis of $L^2(\T^d)$: $-\Delta v_n=\lambda_n v_n$, $\lambda_n=4\pi^2|n|^2$, for all $n\in\Z^d$. We consider the spectral decomposition
\begin{equation}\label{spectralwqz}
w(t)=\sum_{n\in\Z^d}\hat{w}^n(t) v_n,\quad q^\alpha(t)=\sum_{n\in\Z^d}\hat{q}_n^\alpha(t) v_n\quad z(t)=\sum_{n\in\Z^d}\hat{z}^n(t) v_n.
\end{equation}
The integrability and regularity properties of $w$, $q^\alpha$ and $z$ ensure that the series in \eqref{spectralwqz} have at least the following convergence properties:
\begin{equation}\label{CVspectralwqz}
\E\int_0^T\sum_{n\in\Z^d}\lambda_n^2(|\hat{w}^n(t)|^2+|\hat{z}^n(t)|^2)<+\infty,\quad \E\int_0^T\sum_{\substack{n\in\Z^d \\ \alpha\geq 1}}\lambda_n|\hat{q}_n^\alpha(t)|^2<+\infty.
\end{equation}
The solutions to \eqref{eqzi3} and \eqref{eqwq} are weak solutions (\textit{cf.} the terminology in \cite[Definition~2.1]{DuTang2012} for instance), so we can test them against the spectral element $v_n$ to obtain the following set of equations, for $n\in\Z^d$:
\begin{equation}\label{eqzwn}
d\hat{z}^n(t)=f_n(t)dt+g_n^\alpha(t) d B_\alpha(t),\quad d\hat{w}^n(t)=\tilde{f}_n(t)dt+\hat{q}_n^\alpha(t) d B_\alpha(t),
\end{equation}
where
\begin{equation}
f_n(t)=\dual{\Delta(K(t)z(t))+F(t)}{v_n}_{L^2({\T^d})},\quad g_n^\alpha(t)=\dual{g^\alpha(t)}{v_n}_{L^2({\T^d})},
\end{equation}
and
\begin{equation}
\tilde{f}_n(t)=\dual{-\tilde{K}(t)\Delta w(t)-H(t)-C_1w(t)}{v_n}_{L^2({\T^d})}.
\end{equation}
By \eqref{eqzwn}, the standard It\^o formula and the terminal condition~\eqref{TCw}, we obtain
\begin{equation}\label{eqzwnIto}
0=\E\left[\hat{z}^n(0)\hat{w}^n(0)\right]
+\int_0^T\E\left[ f_n(t)\hat{w}^n(t)+\tilde{f}_n(t)\hat{z}^n(t)+g_n^\alpha(t)\hat{q}_n^\alpha(t)\right]dt .
\end{equation}
We then sum \eqref{eqzwnIto} for $n\in\Z^d$ bounded by $N$: $|n|\leq N$. We use the property
\begin{equation}\label{ippH2}
-\dual{\Delta u}{v_n}_{L^2({\T^d})}=\lambda_n\dual{u}{v_n}_{L^2({\T^d})},\quad u\in H^2({\T^d}),
\end{equation}
to obtain in particular a term
\begin{equation}\label{finiteN1}
\E\int_0^T \sum_{|n|\leq N} \left[\dual{K(t)z(t)}{v_n}_{L^2({\T^d})}\dual{\Delta w(t)}{v_n}_{L^2({\T^d})}
-\dual{\tilde{K}(t)\Delta w(t)}{v_n}_{L^2({\T^d})}\dual{z(t)}{v_n}_{L^2({\T^d})}
\right]dt.
\end{equation}
Since all the terms $Kz$, $\Delta w$, $\tilde{K}\Delta w$, $z$ involved in \eqref{finiteN1} belong to $L^2(\Omega\times(0,T)\times {\T^d})$, we have the convergence when $N\to+\infty$ of the quantity \eqref{finiteN1} to the term
\begin{equation}
\E\left[\iint_{Q_T}\Delta w (K-\tilde{K})z dxdt\right].
\end{equation}
The convergence of the other terms in the sum over $|n|\leq N$ of \eqref{eqzwnIto} is similar: we obtain \eqref{Itozw} in the limit.

\paragraph{Step 3.} We claim here that $\PP$-a.s., for all $(x,t)\in Q_T$, $w(x,t)\geq 0$. This can be proved by justification of the It\^o formula for the quantity $w\mapsto\|w^-\|_{L^2({\T^d})}^2$, where $w^-=\max(-w,0)$ is the negative part of $w$. The result also follows directly from \cite[Theorem~5.1]{DuTang2012} and \eqref{TCw}, \eqref{posHK}.

\paragraph{Step 4.} Bounds on $(w,(q^\alpha))$. The basic principle to obtain appropriate estimates on $w$ and $q^\alpha$ (see \eqref{boundwq}) is to ``multiply'' the equation \eqref{eqwq} by $\Delta w$. In the deterministic case, we use an integration by parts and the terminal condition \eqref{TCw} to obtain
\begin{multline}\label{maxregt}
\int_0^T \dual{\partial_t w}{\Delta w}_{L^2({\T^d})}dt=\\
-\int_0^T \dual{\partial_t\nabla w}{\nabla w}_{L^2({\T^d})}dt=
-\int_0^T\frac{d\;}{dt}\frac12\|\nabla w(t)\|_{L^2({\T^d})}^2 dt=\frac12\|\nabla w(0)\|_{L^2({\T^d})}^2.
\end{multline}
The analogous result for the solution to \eqref{eqwq} is more delicate to justify as $\partial_t w$ has no proper sense. We use again, as in \textbf{Step~2.}, a spectral decomposition to justify our computations. We start from the equation~\eqref{eqzwn} for $\hat{w}_n(t)$, use the It\^o formula and the terminal condition~\eqref{TCw} to compute the evolution of the square of $t\mapsto e^{C_1t} \hat{w}_n(t)$. After taking the expectation of the result, we obtain
\begin{multline}\label{squarewn}
\frac12\E\left[|\hat{w}_n(0)|^2\right]
+\E\int_0^T e^{2C_1 t}\left[\dual{-\tilde{K}(t)\Delta w(t)-H(t)}{v_n}_{L^2({\T^d})}\hat{w}_n(t) 
+\frac12\sum_\alpha|\hat{q}_n^\alpha|^2\right]dt\\
=0.
\end{multline}
We multiply \eqref{squarewn} by $\lambda_n$, sum the result over $|n|\leq N$, use \eqref{ippH2}, and the identity
\begin{equation}\label{H1spectral}
\|\nabla u\|_{L^2({\T^d})}=\sum_{n\geq 1}\lambda_n\left|\dual{u}{v_n}_{L^2({\T^d})}\right|^2,\quad u\in H^1({\T^d}),
\end{equation}
to obtain in the limit $N\to+\infty$ the following estimate:
\begin{multline}\label{MaxRegw}
\frac12\E\left[\|\nabla w(0)\|_{L^2({\T^d})}^2\right]
+\E\int_0^T e^{2C_1 t}
\left[
\dual{\tilde{K}(t)\Delta w(t)}{\Delta w(t)}_{L^2({\T^d})}
+\frac12\sum_\alpha\|\nabla q^\alpha\|_{L^2({\T^d})}^2\right]dt\\
=
-\E\int_0^T e^{2C_1 t}
\dual{H(t)}{\Delta w(t)}_{L^2({\T^d})}
dt.
\end{multline}
Using the bounds on $\tilde{K}$ in \eqref{posHK} and the Cauchy-Schwarz inequality, we deduce from \eqref{MaxRegw} that
\begin{multline}\label{MaxRegw2}
\frac12\E\left[\|\nabla w(0)\|_{L^2({\T^d})}^2\right]
+\E\int_0^T e^{2C_1 t}\left[\frac{\underline{\kappa}}{2}\|\Delta w(t)\|_{L^2({\T^d})}^2
+\frac12\sum_\alpha\|\nabla q^\alpha\|_{L^2({\T^d})}^2\right]dt\\
\leq 
\max_{1\leq i\leq 4}(\kappa_i)\E\int_0^T e^{2C_1 t}\|H(t)\|_{L^2({\T^d})}^2 dt.
\end{multline}
Since
\begin{equation}\label{L2H1spectral}
\|u\|_{H^1({\T^d})}^2\leq C(d)\|\nabla u\|_{L^2({\T^d})}^2,\quad \|u\|_{H^2({\T^d})}^2\leq C(d)\|\Delta u\|_{L^2({\T^d})}^2,
\quad u\in H^2({\T^d}),
\end{equation}
where $C(d)$ is a constant depending on $d$, \eqref{MaxRegw2} gives \eqref{boundwq} as desired.

\paragraph{Step 5.} Let us conclude the proof of  \eqref{mainestim}. We use the fact that $F\leq C_1 z$ (see \eqref{defzzz}-\eqref{eqzi3}) and $w\geq 0$, the Cauchy-Schwarz inequality and the bounds \eqref{boundwq} to deduce from \eqref{Itozw} that
\begin{multline}\label{step5-1}
\left(\E\left[\iint_{Q_T} H z dx dt\right]\right)^2\\
\leq 
R_2\left\{
\E\left[\|z_0\|_{L^2({\T^d})}^2\right]
+\E\left[\iint_{Q_T} e^{-2C_1 t}(K-\tilde{K})^2 z^2 dxdt\right]\right.\\
\left.+\E\left[\iint_{Q_T} e^{-2C_1 t} \sum_\alpha |g^\alpha|^2 dx dt\right]
\right\}
\left\{\E\int_0^T e^{2C_1 t}\|H(t)\|_{L^2({\T^d})}^2 dt\right\},
\end{multline}
where the constant $R_2$ depends on $d$, $\underline{\kappa}$ and $\max_{1\leq i\leq 4}\kappa_i$ only. We apply \eqref{step5-1} with $\tilde{K}=\tilde{K}_\eps$, where
\begin{equation}\label{defKeps}
\tilde{K}_\eps(x,t)=\frac{z(x,t)}{z(x,t)+\eps}K(x,t)=\frac{1}{z(x,t)+\eps}\sum_i \kappa_i\bar{\Phi}(a_i(x,t)).
\end{equation}
This coefficient $\tilde{K}$ has the desired regularity properties by regularity of $z$, and satisfies the bound
\begin{equation}
0\leq (K-\tilde{K}_\eps)z\leq\eps,
\end{equation}
so, taking the limit $\eps\to 0$ in \eqref{step5-1} where $\tilde{K}=\tilde{K}_\eps$, gives us
\begin{multline}\label{step5-2}
\left(\E\left[\iint_{Q_T} H z dx dt\right]\right)^2\\
\leq 
R_2\left\{
\E\left[\|z_0\|_{L^2({\T^d})}^2\right]+\E\left[\iint_{Q_T} e^{-2C_1 t}\sum_\alpha |g^\alpha|^2 dx dt\right]
\right\}
\left\{\E\int_0^T e^{2C_1 t}\|H(t)\|_{L^2({\T^d})}^2 dt\right\}.
\end{multline}
We replace $H$ with $t\mapsto e^{C_1 t}H(t)$ and, by an argument of density relax the hypothesis of regularity of $H$ to obtain
\begin{multline}\label{step5-3}
\left(\E\left[\iint_{Q_T} H \hat{z} dx dt\right]\right)^2\\
\leq 
R_2\left\{
\E\left[\|z_0\|_{L^2({\T^d})}^2\right]+\E\left[\iint_{Q_T} e^{-2C_1 t} \sum_\alpha|g^\alpha|^2 dx dt\right]
\right\}
\left\{\E\int_0^T \|H(t)\|_{L^2({\T^d})}^2 dt\right\},
\end{multline}
for all non-negative $H\in L^2(\Omega\times(0,T),\mathcal{P};L^2({\T^d}))$, where $\hat{z}(t)=e^{-C_1 t}z(t)$. At last, we estimate the term 
\begin{equation}\label{gggalpha}
\E\left[\iint_{Q_T} e^{-2C_1 t} |g^\alpha|^2 dx dt\right].
\end{equation}
Recall that $g^\alpha$ is defined in \eqref{galpha}. By \eqref{growth-sigma-alpha2}, and \eqref{aiaj} we have
\begin{equation}
\sum_\alpha|g^\alpha|^2\leq R_3\nu z^2
\end{equation}
where $R_3$ is a numerical constant, and so 
\begin{equation}\label{step5-4}
\E\left[\iint_{Q_T} e^{-2C_1 t} |g^\alpha|^2 dx dt\right]\leq R_3\nu \E\left[\iint_{Q_T} |\hat{z}|^2 dx dt\right].
\end{equation}
We insert \eqref{step5-4} in \eqref{step5-3} and invoke \eqref{smallnoise} to ensure that $R_2R_3\nu\leq\frac12$ (note well that the constant $R_2R_3$ does not depend on time) to conclude that 
\begin{equation}\label{step5-5}
\E\left[\iint_{Q_T} |\hat{z}|^2 dx dt\right]\leq R_4\E\left[\|z_0\|_{L^2({\T^d})}^2\right],
\end{equation}
which yields \eqref{mainestim}.
\end{proof}


\section{\texorpdfstring{$L^\infty$}{}-estimates}\label{sec:LinftyRD}

\subsection{\texorpdfstring{$L^\infty$}{}-estimate via truncation of the entropy}\label{subsec:LinftyRDproof}

Various approaches to $L^\infty$-estimates for (deterministic) quadratic reaction-diffusion systems exist (see \cite{CaputoVasseur2009,GoudonVasseur10,CaputoGoudonVasseur2019,Souplet2018,FellnerMorganTang2020}). Here we follow the same approach \`a la De Giorgi as in \cite{GoudonVasseur10}, using also some of the elements in \cite{LeocataVovelle2023}.

\begin{theorem}[$L^\infty$-estimate]\label{th:Linfty-estimate} Let
\begin{equation}\label{Theta}
\Theta(u)=\ln(1+\ln(1+u)),\quad u\geq 0.
\end{equation}
Suppose that the coefficients $\sigma_i^\alpha$ satisfy \eqref{cancel-sigma-alpha} and \eqref{growth-sigma-alpha2}. Assume also $a_{i0}\in C^\infty({\T^d})$, $a_{i0}\geq 0$, $i=1,\ldots,4$. Let $a$ be a global-in-time regular solution to \eqref{SS} or a global-in-time solution to \eqref{SS} with the truncated non-linearities defined in \eqref{truncfsigma}-\eqref{truncationTn}. 
Assume also that the space dimension $d$ is $d=1$ or $d=2$ and that the strength of noise is small in the sense of \eqref{smallnoise}. Then $a$ satisfies the following estimate: 
\begin{equation}\label{eq:Linftyestimate}
\E\left[\Theta\left(\sup_{1\leq i\leq 4}\|a_i\|_{L^\infty(Q_T)}\right)\right]\leq C_\infty,
\end{equation}
where the constant $C_\infty$ depends on $d$, $\nu$, $T$, $\min_{1\leq i\leq 4}\kappa_i$, $\max_{1\leq i\leq 4}\kappa_i$ and $\sup_{1\leq i\leq 4}\|a_{i0}\|_{L^\infty({\T^d})}$.
\end{theorem}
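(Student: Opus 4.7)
The plan is to perform a De Giorgi truncation scheme, as in \cite{GoudonVasseur10}, on the entropy variable $u_i:=\bar{\Phi}(a_i)$, and to use the $L^2\ln(L^2)$-bound of Theorem~\ref{th:l2estim} as the seed estimate. By the same Itô formula used to derive \eqref{eqzi}, each $u_i$ solves an SPDE of the form
\begin{equation*}
du_i-\nx\cd(\kappa_i\nx u_i)dt=F_i\,dt+h_i^\alpha\,dB_\alpha,
\end{equation*}
where Lemma~\ref{lem:ControlSourceEntropy} gives $\sum_i F_i\leq C_1 z$ with $z:=\sum_j u_j$, and the growth condition \eqref{growth-sigma-alpha2} together with the elementary bound $\sqrt{a_1a_3+a_2a_4}\leq z$ gives $\sum_\alpha|h_i^\alpha|^2\leq C\nu z^2$. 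The quadratic reaction is therefore sub-linear in the entropy variable, and we may apply the De Giorgi iteration to $u_i$.

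\textbf{Truncation and energy estimate.} Fix $T>0$ and a target level $K>0$ to be chosen later. For $k_n:=K(1-2^{-n})\nearrow K$, apply Itô to $r\mapsto\tfrac12(r-k_n)_+^2$, integrate over $\T^d$ and take the supremum in $t\leq T$ to control
\begin{equation*}
\mathcal{U}_n:=\sup_{t\leq T}\tfrac12\|(u_i-k_n)_+(t)\|_{L^2(\T^d)}^2+\underline{\kappa}\int_0^T\|\nx(u_i-k_n)_+\|_{L^2(\T^d)}^2\,ds
\end{equation*}
by $\|(u_i-k_n)_+(0)\|_{L^2}^2$, reaction and Itô-correction terms localised on $A_n:=\{u_i>k_n\}$, and a martingale whose quadratic variation is controlled, by Cauchy-Schwarz and the bound on $h_i^\alpha$, by $\nu\int_0^T\|z\|_{L^2}^2\|(u_i-k_n)_+\|_{L^2}^2\,ds$. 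In dimension $d\leq 2$, the embedding $H^1(\T^d)\hookrightarrow L^p(\T^d)$ (valid for arbitrary $p<\infty$, and into $L^\infty$ when $d=1$), combined with Chebyshev's inequality applied via the gap $(u_i-k_n)_+\geq K2^{-n-1}$ on $A_{n+1}$, yields the nonlinear De Giorgi recursion
\begin{equation*}
\E[\mathcal{U}_{n+1}]\leq \frac{C^n}{K^\beta}\,\E[\mathcal{U}_n]^{1+\delta}
\end{equation*}
for some $\beta,\delta>0$ depending only on $d$ and on $\underline{\kappa},\max_i\kappa_i$; the martingale contribution is absorbed thanks to the smallness \eqref{smallnoise} and the BDG inequality, while Theorem~\ref{th:l2estim} controls the moments of $\|z\|_{L^2(Q_T)}$ uniformly. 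By the classical De Giorgi lemma, there exists $\eta>0$ (depending on $d,\underline\kappa,\bar\kappa$) such that $\E[\mathcal{U}_0]\leq \eta K^\beta$ forces $\mathcal{U}_n\to 0$, hence $u_i\leq K$ on $Q_T$.

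\textbf{From the recursion to the double-logarithmic moment.} The main obstacle is that the BDG step gives only an $L^p(\Omega)$-estimate, so the iteration threshold $K$ must itself be randomised. Following the stochastic strategy of \cite{LeocataVovelle2023}, the remedy is to run the recursion on the event $\{\mathcal{Z}\leq \Lambda\}$ where $\mathcal{Z}:=\|z\|_{L^2(Q_T)}^2+\sup_{t\leq T}\mathcal{E}_2(t)+\sup_i\|u_i(0)\|_{L^\infty}$ and $\Lambda>0$ is a deterministic parameter. Run on this event the recursion delivers, for some $c>0$, $\sup_i\|u_i\|_{L^\infty(Q_T)}\leq c(1+\Lambda)^{1/\beta}$, whence
\begin{equation*}
\PP\!\left(\sup_i\|u_i\|_{L^\infty(Q_T)}>s\right)\leq \PP\!\left(\mathcal{Z}>c^{-\beta}s^\beta\right)\leq A\,(1+s)^{-\gamma}
\end{equation*}
by Markov's inequality and the moment estimates of Proposition~\ref{prop:entropy-estimate-quad-p} and Theorem~\ref{th:l2estim} (the exponent $\gamma>0$ is fixed by how many moments of $\mathcal{Z}$ one gathers, any $\gamma>0$ suffices). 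Integrating this tail gives $\E[\ln(1+\sup_i\|u_i\|_{L^\infty(Q_T)})]<+\infty$. Finally, since $\bar\Phi$ grows super-linearly at infinity, $\bar\Phi(a)\geq\tfrac12 a\ln(1+a)$ for $a$ large, so $\ln(1+a_i)\lesssim 1+u_i/\ln(2+u_i)\lesssim 1+u_i$, and hence $\Theta(\|a_i\|_{L^\infty(Q_T)})=\ln(1+\ln(1+\|a_i\|_{L^\infty(Q_T)}))\lesssim 1+\ln(1+\|u_i\|_{L^\infty(Q_T)})$. Taking expectations yields \eqref{eq:Linftyestimate}, with a constant $C_\infty$ depending on the quantities listed in the statement.
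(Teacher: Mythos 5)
Your overall strategy (De Giorgi truncation on the entropy variable, seeded by the $L^2\ln(L^2)$ duality estimate of Theorem~\ref{th:l2estim}) is the same as the paper's, but the two steps where the stochastic terms actually have to be handled contain genuine gaps. First, the recursion $\E[\mathcal{U}_{n+1}]\leq C^nK^{-\beta}\E[\mathcal{U}_n]^{1+\delta}$ cannot be obtained the way you describe. The deterministic De Giorgi inequality is superlinear \emph{pathwise}; after taking expectations, Jensen gives $\E[\mathcal{U}_n]^{1+\delta}\leq\E[\mathcal{U}_n^{1+\delta}]$, which is the wrong direction, and, more importantly, the martingale term is not ``absorbed'' by BDG and \eqref{smallnoise}: BDG bounds $\E[\sup_t|M_t|]$ by $\E[\langle M\rangle_T^{1/2}]$, and after Young's inequality one is left with a term of the form $\nu\,\E\bigl[\|z\|_{L^2(Q_T)}^2\cdot(\text{level-}n\text{ quantity})\bigr]$ which is \emph{linear}, not superlinear, in $\mathcal{U}_n$, and which moreover couples $\mathcal{U}_n$ to $\|z\|_{L^2(Q_T)}^2$ --- a random variable for which Theorem~\ref{th:l2estim} provides only a first moment, so H\"older cannot decouple the product. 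The same objection defeats the fallback of running the iteration ``on the event $\{\mathcal{Z}\leq\Lambda\}$'': conditioning on $\mathcal{Z}\leq\Lambda$ does not give a pathwise bound on the martingale increments at each truncation level, so the deterministic De Giorgi lemma cannot be invoked there. This is precisely the difficulty the paper confronts by running the recursion on a decreasing family of events $\mathbf{H}_k$ and estimating $\PP(\mathbf{H}_k)-\PP(\mathbf{H}_{k+1})$ via the \emph{exponential} martingale inequality \eqref{expmartingaleU}, combined with the pathwise bound \eqref{noavMQ7} on the quadratic variation on the good event.

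Second, the tail bound $\PP(\sup_i\|u_i\|_{L^\infty(Q_T)}>s)\leq A(1+s)^{-\gamma}$ is far stronger than what the available estimates permit, and this is a symptom of the first gap. The seed of the iteration is the probability that the truncated energy at level $\xi/2$ is small, and the only smallness one can extract from the $L^2\ln(L^2)$ bound through the truncation is the factor $1/\ln(1+\xi)$ of \eqref{PsixibyPhixi}--\eqref{p0Markov1}; no polynomial decay in $\xi$ is available from moments of $\mathcal{Z}$, because Proposition~\ref{prop:entropy-estimate-quad-p} controls moments of the \emph{untruncated} entropy, which do not see the level $\xi$ at all. The resulting tail is only $\PP(\mathtt{B}_T>\xi)\lesssim(\ln(1+\xi))^{-1/2}$ (see \eqref{tailxi2}), which is exactly why the theorem asserts only a $\ln\ln$ moment: a polynomial tail would yield $\E[\ln(1+\|a\|_{L^\infty})]<\infty$, a strictly stronger statement than \eqref{eq:Linftyestimate}. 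To repair the argument you need (i) the event-based recursion with the exponential martingale inequality in place of the expectation-based recursion, and (ii) an honest computation of the seed probability $p_0$, which then dictates the double-logarithmic form of the final moment.
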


\begin{proof}[Proof of Theorem~\ref{th:Linfty-estimate}] We will consider only the case $d=2$.
For $a,\xi\geq 0$, set
\begin{equation}\label{acutxi}
\axi=(a-\xi)^+,\quad a^{\xi*}=1+\axi,
\end{equation}
and
\begin{equation}\label{Phicutxi}
\Phi(a;\xi)=\Phi(\axi)=(1+\axi)\ln(1+\axi)-\axi=a^{\xi*}\ln(a^{\xi*})-a^{\xi*}+1.
\end{equation}

\paragraph{Evolution of the entropy.} Note that
\begin{equation}\label{DiffPhixi}
\frac{\partial\;}{\partial a}\Phi(a;\xi)=\ln(a^{\xi*})=\ln(a^{\xi*})\mathbf{1}_{a>\xi},\quad \frac{\partial^2\;}{\partial a^2}\Phi(a;\xi)=\frac{\mathbf{1}_{a>\xi}}{a^{\xi*}},
\end{equation}
so, similarly to \eqref{entropy1p}, we have
\begin{equation}\label{entropy1xi}
\mathcal{E}(t;\xi)+\mathcal{D}(t;\xi)\leq\mathcal{S}(t;\xi)+\mathcal{M}(t;\xi)+\mathcal{E}(0;\xi),
\end{equation}
where
\begin{equation}\label{anovED}
\mathcal{E}(t;\xi)=\sum_{i=1}^4\int_{\T^d}\Phi(a_i^\xi(t))dx,\quad \mathcal{D}(t;\xi)=\E\sum_{i=1}^4\int_0^t\int_{\T^d} \frac{|\nabla a_i^\xi|^2}{a_i^{\xi*}}dxds,
\end{equation}
and
\begin{equation}\label{noavS}
\mathcal{S}(t;\xi)=\int_0^t\int_{\T^d} \left\{-(a_1a_3-a_2a_4)\left(\ln(a_1^{\xi*}a_3^{\xi*})-\ln(a_2^{\xi*} a_4^{\xi*})\right)+\frac12 \sum_{\alpha,i}\frac{|\sigma_i^\alpha(a_i)|^2}{a_i^{\xi*}}\mathbf{1}_{a_i>\xi}
\right\}dx ds.
\end{equation}
The martingale term is
\begin{equation}\label{noavM}
\mathcal{M}(t;\xi)=\int_0^t\int_{\T^d} \sum_{\alpha,i}\sigma_i^\alpha(a_i)\ln(a^{\xi*}_i) dx d B_\alpha(s).
\end{equation}
As in \eqref{entropyestimatep-bis}, we introduce the quantity
\begin{equation}\label{calUtxi}
\mathcal{U}(t;\xi)=\sup_{0\leq s\leq t}\mathcal{E}(s;\xi)+\mathcal{D}(t;\xi).
\end{equation}
\paragraph{Auxiliary functional.} Set
\begin{equation}\label{goodPsi}
\Psi(a)=\int_0^a \sqrt{\frac{\Phi(s)}{s^*}}ds=\int_0^a \sqrt{\frac{(1+s)\ln(1+s)-s}{1+s}}ds
\end{equation}
and
\begin{equation}\label{calUtxipsi}
\mathcal{U}_\psi(t;\xi)=\left[\int_0^t\int_{\T^d} \sum_i |\Psi(a_i^\xi)|^2 dxds\right]^{1/2}.
\end{equation}
We have
\begin{equation}\label{nablaPsi}
\nabla\Psi(a)=\sqrt{\Phi(a)}\frac{\nabla a}{\sqrt{1+a}},
\end{equation}
so, by Sobolev's embedding (recall that the space dimension is $d=2$ here) and Cauchy-Schwarz' inequality,
\begin{equation}\label{PsibynablaPsi}
\|\Psi(a)\|_{L^2({\T^d})}\lesssim \|\nabla\Psi(a)\|_{L^1({\T^d})}\lesssim\|\Phi(a)\|_{L^1({\T^d})}^\frac12\|(1+a)^{-\frac12}\nabla a\|_{L^2({\T^d})},
\end{equation}
which yields
\begin{equation}
\mathcal{U}_\psi(t;\xi)^2\lesssim\int_0^t \mathcal{E}(s;\xi)\sum_i\left\|(1+a_i^\xi)^{-\frac12}\nabla a_i^\xi\right\|_{L^2({\T^d})}^2 ds\lesssim\left[\sup_{s\in[0,t]}\mathcal{E}(s;\xi)\right]\mathcal{D}(t;\xi),
\end{equation}
and thus
\begin{equation}\label{nablaPsiviaEntropy}
\mathcal{U}_\psi(t;\xi)\lesssim \mathcal{U}(t;\xi).
\end{equation}
As in \cite[Lemma 3.2]{GoudonVasseur10}, we will use the fact that
\begin{equation}\label{gPsi}
g(a)\ln(a^*)\lesssim|\Psi(a)|^2,\quad g(a):=a^3\mathbf{1}_{0\leq a\leq 1}+a^2\mathbf{1}_{a>1}.
\end{equation}
\paragraph{Bound on the source term.} We will proceed as in the previous section~\ref{sec:entropyestimate}, taking into account the additional effects of the truncation at level $\xi$. In particular, our aim is not a bound like \eqref{eq:controlSource}, but instead, using \eqref{nablaPsiviaEntropy}-\eqref{gPsi}, a bound of $\mathcal{S}(t;\xi)$ by a power of $\mathcal{U}(t;\zeta)$ (or more exactly, a power of $\mathcal{U}_\psi(t;\zeta)$), for a given truncation level $\zeta<\xi$. We begin as in Section~\ref{sec:entropyestimate} however, and use first the growth condition \eqref{growth-sigma-alpha2}, to get
\begin{multline}\label{boundSsto1}
\mathcal{S}(t;\xi)\leq \int_0^t\int_{\T^d} \left\{-(a_1^{\xi*}a_3^{\xi*}-a_2^{\xi*}a_4^{\xi*})\left(\ln(a_1^{\xi*}a_3^{\xi*})-\ln(a_2^{\xi*} a_4^{\xi*})\right)\right\}dxds\\
+\nu\int_0^t\int_{\T^d}  \sum_{i}\frac{a_1^{\xi*}a_3^{\xi*}+a_2^{\xi*}a_4^{\xi*}}{a_i^{\xi*}}\mathbf{1}_{a_i>\xi}dx ds+\varphi(t;\xi),
\end{multline}
where
\begin{multline}\label{boundSsto1e}
\varphi(t;\xi)= \int_0^t\int_{\T^d} \left\{(a_1^{\xi*}a_3^{\xi*}-a_2^{\xi*}a_4^{\xi*})-(a_1a_3-a_2a_4)\right\}\left(\ln(a_1^{\xi*}a_3^{\xi*})-\ln(a_2^{\xi*} a_4^{\xi*})\right)dx ds\\
+\nu \int_0^t\int_{\T^d} \sum_{i}\frac{(a_1a_3+a_2a_4)-(a_1^{\xi*}a_3^{\xi*}+a_2^{\xi*}a_4^{\xi*})}{a_i^{\xi*}}\mathbf{1}_{a_i>\xi}
dx ds. 
\end{multline}
We have 
\begin{equation}\label{preR12}
a_ia_j-a_i^{\xi*}a_j^{\xi*}=a_ia_j-a_i^{\xi}a_j^{\xi}-1-(a_i^\xi+a_j^\xi).
\end{equation}
The term $a_ia_j-a_i^{\xi}a_j^{\xi}$ is non-positive, so
\begin{equation}\label{R12a}
-1-(a_i^\xi+a_j^\xi)\leq a_ia_j-a_i^{\xi*}a_j^{\xi*}.
\end{equation}
From \eqref{preR12}, we also obtain the bound from below $a_ia_j-a_i^{\xi*}a_j^{\xi*}\leq a_ia_j-a_i^{\xi}a_j^{\xi}$, and since $a_i\leq a_i^\xi+\xi$, we get
\begin{equation}\label{R12}
-1-(a_i^\xi+a_j^\xi)\leq a_ia_j-a_i^{\xi*}a_j^{\xi*}\leq\xi^2+\xi(a_i^\xi+a_j^\xi),
\end{equation}
which implies in particular
\begin{equation}\label{R12plus}
|a_ia_j-a_i^{\xi*}a_j^{\xi*}|\leq\xi^2+\xi(a_i^\xi+a_j^\xi),
\end{equation}
if 
\begin{equation}\label{largexi}
\xi\geq 1.
\end{equation}
Assuming \eqref{largexi} thus, we obtain the first estimate
\begin{multline}\label{boundSsto1e2}
\varphi(t;\xi)\leq 2\int_0^t\int_{\T^d} \left(\xi^2+\xi\sum_i a_i^\xi\right)\left(\ln(a_1^{\xi*}a_3^{\xi*})+\ln(a_2^{\xi*} a_4^{\xi*})\right)dx ds\\
+\nu \int_0^t\int_{\T^d} \sum_{i}\frac{\xi^2+\xi\sum_j a_j^\xi}{a_i^{\xi*}}\mathbf{1}_{a_i>\xi}
dx ds.
\end{multline}
Next, we will use the following inequalities (similar to \eqref{aiaj}):
\begin{equation}\label{aiaj1}
a_j^\xi\ln(a_i^{\xi*})\leq a_i^\xi\ln(a_i^{\xi*})+a_j^\xi\ln(a_j^{\xi*}),
\quad a_j^{\xi}\mathbf{1}_{a_i>\xi}\leq a_i^{\xi}\mathbf{1}_{a_i>\xi}+a_j^{\xi}\mathbf{1}_{a_j>\xi}.
\end{equation}
In the second term in \eqref{boundSsto1e2} we also use the elementary bound 
$(a_i^{\xi*})^{-1}\leq 1$ to get
\begin{equation}\label{boundSsto1e3}
\varphi(t;\xi)\lesssim \int_0^t\int_{\T^d} \sum_i (\xi^2+\xi a_i^{\xi})(\ln(a_i^{\xi*})+\mathbf{1}_{a_i>\xi})dx ds.
\end{equation}
Let then $\zeta\in (0,\xi)$. We have $a^\xi\leq a^\zeta$ and 
\begin{equation}\label{bound1axi}
\mathbf{1}_{a>\xi}\leq\left(\frac{a^\zeta}{\xi-\zeta}\right)^{\alpha+1}\mathbf{1}_{a^\zeta\leq 1}
+\left(\frac{a^\zeta}{\xi-\zeta}\right)^\alpha\mathbf{1}_{1<a^\zeta},
\end{equation}
for some arbitrary exponent $\alpha\geq 0$. Taking successively $\alpha=2,1$, and using the fact that $\ln(a^{\xi*})=0$ if $a\leq\xi$, we deduce from \eqref{gPsi}-\eqref{bound1axi} that
\begin{equation}\label{bound1axiAll}
\ln(a^{\xi*})\leq W_2(\xi,\zeta)|\Psi(a^\zeta)|^2,\quad
a^\xi\ln(a^{\xi*})\leq W_1(\xi,\zeta)|\Psi(a^\zeta)|^2
\end{equation}
where
\begin{equation}\label{Wxizeta}
W_\alpha(\xi,\zeta)=\frac{1}{(\xi-\zeta)^{\alpha+1}}+\frac{1}{(\xi-\zeta)^\alpha}.
\end{equation}
We also note
\begin{equation}
a>\xi\Rightarrow a^{\zeta*}\geq 1+(\xi-\zeta)\Rightarrow \frac{\ln(a^{\zeta*})}{\ln(1+(\xi-\zeta))}\geq 1,
\end{equation}
so, similarly to \eqref{bound1axiAll}, we have
\begin{equation}\label{bound111}
\mathbf{1}_{a>\xi}\leq \frac{W_2(\xi,\zeta)}{\ln(1+(\xi-\zeta))}|\Psi(a^\zeta)|^2,\quad
a^\xi \mathbf{1}_{a>\xi}\leq \frac{W_1(\xi,\zeta)}{\ln(1+(\xi-\zeta))}|\Psi(a^\zeta)|^2.
\end{equation}
Finally, we deduce from \eqref{boundSsto1e3}, \eqref{bound1axiAll}, \eqref{bound111} and \eqref{nablaPsiviaEntropy} that 
\begin{equation}\label{boundSsto1e4}
\varphi(t;\xi)\lesssim \frac{1}{1\wedge \ln(1+(\xi-\zeta))}\left[\xi^2 W_2(\xi,\zeta)+\xi W_1(\xi,\zeta)\right]\mathcal{U}_\psi(t;\xi)^2,
\end{equation}
and obtain therefore a bound from above on the last term in the right-hand side of \eqref{boundSsto1}. The other two terms in the right-hand side of \eqref{boundSsto1} can be gathered to form a quantity very similar to the the function $\Theta(a)$ in \eqref{integrandSS}. The procedure followed to prove \eqref{Theta-by-E} can be adapted to establish 
\begin{multline}\label{boundSsto2}
\int_0^t\int_{\T^d} \left\{-(a_1^{\xi*}a_3^{\xi*}-a_2^{\xi*}a_4^{\xi*})\left(\ln(a_1^{\xi*}a_3^{\xi*})-\ln(a_2^{\xi*} a_4^{\xi*})\right)\right\}dxds\\
+\nu\int_0^t\int_{\T^d}  \sum_{i}\frac{a_1^{\xi*}a_3^{\xi*}+a_2^{\xi*}a_4^{\xi*}}{a_i^{\xi*}}\mathbf{1}_{a_i>\xi}dx ds
\lesssim \int_0^t\int_{\T^d} \sum_{i,j}a^{\xi*}_j \mathbf{1}_{a_i>\xi}dxds.
\end{multline}
By \eqref{aiaj1}, \eqref{bound111}, and \eqref{boundSsto1e4}, we obtain
\begin{equation}\label{boundSsto3}
\mathcal{S}(t;\xi)\lesssim \frac{1}{1\wedge \ln(1+(\xi-\zeta))}\left[\xi^2 W_2(\xi,\zeta)+\xi W_1(\xi,\zeta)\right]\mathcal{U}_\psi(t;\xi)^2,
\end{equation}
assuming \eqref{largexi}. We report the estimate \eqref{boundSsto3} in \eqref{entropy1xi}. Assume that 
\begin{equation}\label{largexi2}
1\vee \max_{1\leq i\leq 4}\|a_{i0}\|_{L^\infty({\T^d})}\leq\xi.
\end{equation}
Then $\mathcal{E}(0;\xi)=0$ and we obtain the a.s. inequalities
\begin{equation}\label{CLBoundS}
\mathcal{U}(t;\xi)
\lesssim \frac{1}{1\wedge \ln(1+(\xi-\zeta))}\left[\xi^2 W_2(\xi,\zeta)+\xi W_1(\xi,\zeta)\right]\mathcal{U}_\psi(t;\zeta)^2+\mathcal{M}(t;\xi)^*,
\end{equation}
where
\begin{equation}\label{Mstar}
\mathcal{M}(t;\xi)^*=\sup_{s\in[0,t]}|\mathcal{M}(s;\xi)|.
\end{equation}
\paragraph{Bound on the martingale term.} The quadratic variation of the martingale $\mathcal{M}(t;\xi)$ in \eqref{noavM} is
\begin{equation}\label{noavMQ}
\dual{\mathcal{M}(\xi)}{\mathcal{M}(\xi)}_t=\int_0^t\sum_\alpha\left|\int_{\T^d} \sum_{i}\sigma_i^\alpha(a)\ln(a^{\xi*}_i) dx\right|^2 ds.
\end{equation}
We proceed as in \eqref{QvarMt2} (simply replace $\ln(a^{*}_j)$ by $\ln(a^{\xi*}_j)$) to obtain
\begin{equation}
\dual{\mathcal{M}(\xi)}{\mathcal{M}(\xi)}_t
\lesssim \int_0^t\left|\int_{\T^d} \sum_{i,j}a_i\ln(a^{\xi*}_j)dx\right|^2 ds.\label{noavMQ2}
\end{equation}
The bounds $a_i\leq a_i^\xi+\xi$ and \eqref{aiaj1} show that 
\begin{equation}\label{noavMQ3}
\int_{\T^d} \sum_{i,j}a_i\ln(a^{\xi*}_j)dx\lesssim \sum_i\int_{\T^d} (\xi+a_i^\xi)\ln(a^{\xi*}_i)dx.
\end{equation}
We can estimate the right-hand side of \eqref{noavMQ3} in two different ways. First, using \eqref{bound111}, we get, for $\zeta<\xi$,
\begin{equation}\label{noavMQ4}
\int_{\T^d} \sum_{i,j}a_i\ln(a^{\xi*}_j)dx\lesssim \frac{\xi W_2(\xi,\zeta)+W_1(\xi,\zeta)}{\ln(1+(\xi-\zeta))}\sum_i\int_{\T^d} |\Psi(a_i^\zeta)|^2 dx.
\end{equation}
We also have $a_i^\xi\ln(a^{\xi*}_i)\leq 2\Phi(a_i^\xi)\leq 2\Phi(a_i^\zeta)$ and 
\begin{equation}\label{lnbyPhi}
\xi\ln(a^{\xi*}_i)\leq\xi\frac{a^\zeta}{\xi-\zeta}\ln(a^{\zeta*}_i)\leq 2\frac{\xi}{\xi-\zeta}\Phi(a_i^\zeta),
\end{equation}
so
\begin{equation}\label{noavMQ5}
\sup_{0\leq s\leq t}\int_{\T^d} \sum_{i,j}a_i\ln(a^{\xi*}_j)dx\lesssim \frac{\xi}{\xi-\zeta}\sup_{0\leq s\leq t}\mathcal{E}(s;\zeta)\lesssim  \frac{\xi}{\xi-\zeta}\mathcal{U}(t;\zeta).
\end{equation}
Using both \eqref{noavMQ4} and \eqref{noavMQ5} in \eqref{noavMQ2}, we obtain
\begin{equation}\label{noavMQ6}
\dual{\mathcal{M}(\xi)}{\mathcal{M}(\xi)}_t\lesssim \left(\frac{\xi}{\xi-\zeta}\frac{\xi W_2(\xi,\zeta)+W_1(\xi,\zeta)}{\ln(1+(\xi-\zeta))}\right)\mathcal{U}(t;\zeta)\mathcal{U}_\Psi(t;\zeta)^2,
\end{equation}
and from \eqref{nablaPsiviaEntropy}, we deduce that
\begin{equation}\label{noavMQ7}
\dual{\mathcal{M}(\xi)}{\mathcal{M}(\xi)}_t\lesssim \left(\frac{\xi}{\xi-\zeta}\frac{\xi W_2(\xi,\zeta)+W_1(\xi,\zeta)}{\ln(1+(\xi-\zeta))}\right)\mathcal{U}(t;\zeta)^3.
\end{equation}

\paragraph{Recursive estimates.} Let $\xi$ be a fixed threshold satisfying
\begin{equation}\label{largebarxi}
4\left(1\vee \max_{1\leq i\leq 4}\|a_{i0}\|_{L^\infty({\T^d})}\right)\leq\xi.
\end{equation}
Let also $\underline{\rho}$ be a given exponent satisfying
\begin{equation}\label{goodurho}
1<\underline{\rho}<\frac{1+\rho}{2}=\frac32.
\end{equation}
Let $\delta\in(0,1)$ be a given parameter (which will eventually depend on $\xi$, see \eqref{deltaxi}). We set $\xi_k=(1-2^{-k-1})\xi$, $k\in\N$ and we examine the occurrence of the bound
\begin{equation}\label{boundUk}
\mathcal{U}_k\leq\delta^{\underline{\rho}^k},\quad\mathcal{U}_k:=\mathcal{U}(T;\xi_k).
\end{equation}
Assume first that \eqref{boundUk} is satisfied at rank $k$. Then, using \eqref{nablaPsiviaEntropy} and the inequality \eqref{CLBoundS} with $\zeta=\xi_k$ and $\xi=\xi_{k+1}$, we observe that
\begin{equation}\label{boundUkp1}
\mathcal{U}_{k+1}\leq 
C_1\frac{1}{1\wedge\ln(1+2^{-k-2}\xi)}W_k
\delta^{\rho\underline{\rho}^k}+C_1\mathcal{M}(t;\xi_{k+1})^*,
\end{equation}
where
\begin{equation}\label{Wk}
W_k=\xi_{k+1}^2 W_2(\xi_{k+1},\xi_k)+\xi_{k+1} W_1(\xi_{k+1},\xi_k)
\lesssim \frac{8^{k}}{\xi}+4^{k},
\end{equation}
so \eqref{boundUk} will be satisfied at rank $k+1$ if
\begin{equation}\label{OK1}
C_1\frac{W_k}{1\wedge\ln(1+2^{-k-2}\xi)}
\delta^{(\rho-\underline{\rho})\underline{\rho}^k}
\leq\frac12,
\end{equation}
and
\begin{equation}\label{OK2}
\mathcal{M}(T;\xi_{k+1})^*\leq \frac12\delta^{\underline{\rho}^{k+1}}.
\end{equation}
Let $E_{k+1}$ denote the event \eqref{OK2} and $H_k$ denote the event \eqref{boundUk}. Our aim is to evaluate the probability $\PP(\mathbf{H})$ of the event 
\begin{equation}\label{goodEventU}
\mathbf{H}=\bigcap_{k\geq 0} H_k=\bigcap_{k\geq 0}\mathbf{H}_k,\quad \mathbf{H}_k:=\bigcap_{j=0}^k H_j.
\end{equation}
We will estimate $\sum_{k\geq 0} p_k-p_{k+1}$, where $p_k=\PP(\mathbf{H}_k)$, and then, in the next step, evaluate $p_0$.
Assume that \eqref{OK1} is satisfied for all $k\geq 0$ (we will see that an appropriate choice of parameter $\delta$ ensures this). Then $\mathbf{H}_k\cap E_{k+1}\subset \mathbf{H}_{k+1}$, so
\begin{equation}\label{decaypkU}
p_k-p_{k+1}\leq \PP(\mathbf{H}_{k})-\PP(\mathbf{H}_{k}\cap E_{k+1})=\PP(\mathbf{H}_{k}\cap E_{k+1}^c)
\end{equation}
We use the exponential martingale inequality
\begin{equation}\label{expmartingaleU}
\PP\left(M^*_\infty\geq a+b\dual{M}{M}_\infty\right)\leq e^{-2ab}
\end{equation}
with $M_t=\mathcal{M}(t\wedge T;\xi_{k+1})$ and some deterministic numbers $a=V_k$, $b=\hat{V}_k^{-1}$ that will be fixed later (see \eqref{VVkU}), to get 
\begin{equation}\label{evaluatesmallMT1U}
\PP(\mathbf{H}_k\cap E_{k+1}^c)\leq e^{-2V_k\hat{V}_k^{-1}}+\PP(B_k),
\end{equation}
where $B_k$ is the event
\begin{equation}\label{HkU}
B_k=\mathbf{H}_k\cap\left\{\frac12 \delta^{\underline{\rho}^{k+1}}\leq V_k+\hat{V}_k^{-1}\dual{\mathcal{M}(\xi_{k+1})}{\mathcal{M}(\xi_{k+1})}_T\right\}.
\end{equation}
We use the estimate \eqref{noavMQ7} on the quadratic variation of $\mathcal{M}(\xi)$ to obtain
\begin{equation}\label{noavMQ3k}
\dual{\mathcal{M}(\xi_{k+1})}{\mathcal{M}(\xi_{k+1})}_T \leq C_2 \frac{2^k W_k}{\xi\ln(1+2^{-k-2}\xi)}\mathcal{U}_k^3,
\end{equation}
and thus
\begin{equation}\label{noavMQ3k2}
\dual{\mathcal{M}(\xi_{k+1})}{\mathcal{M}(\xi_{k+1})}_T \leq C_2 \frac{2^k W_k}{\xi\ln(1+2^{-k-2}\xi)}\delta^{3\underline{\rho}^k},
\end{equation}
if $H_k$ is realized. We choose $V_k$ and $\hat{V}_k$ as follows:
\begin{equation}\label{VVkU}
V_k=\frac18\delta^{\underline{\rho}^{k+1}},\quad \hat{V}_k^{-1}=V_k\left[C_2 \frac{2^k W_k}{\xi\ln(1+2^{-k-2}\xi)}\delta^{3\underline{\rho}^k}\right]^{-1}.
\end{equation}
With this choice of the parameters, $B_k$ has probability $0$ and \eqref{evaluatesmallMT1U} yields
\begin{equation}\label{GoodDecaypk}
p_k-p_{k+1}\leq
\exp\left(
-\frac{\xi\ln(1+2^{-k-2}\xi)}{C_3 2^k W_k}\delta^{-\underline{\rho}^k(3-2\underline{\rho})}
\right).
\end{equation}
Let $K$ denote the index such that $2^{-K}\xi\approx 1$ (we are interested in large values of $\xi$, so large values of $K$ as well). For $k\leq K$, we have $W_k\lesssim 4^k$ by \eqref{Wk} and $1\lesssim\ln(1+2^{-k-2}\xi)$, so \eqref{OK1} is realized if
\begin{equation}\label{OK1sub}
C_4 4^k
\delta^{(\rho-\underline{\rho})\underline{\rho}^k}
\leq\frac12,
\end{equation}
while \eqref{GoodDecaypk} implies 
\begin{equation}\label{GoodDecaypksub}
p_k-p_{k+1}\leq
\exp\left(
-\frac{1}{C_5}4^{-k}\delta^{-\underline{\rho}^k(3-2\underline{\rho})}
\right).
\end{equation}
The map $t\mapsto 4^t \delta^{(\rho-\underline{\rho})\underline{\rho}^t}$ is non-increasing on $\R_+$ if 
$\ln(4)\leq\ln(\underline{\rho})(\rho-\underline{\rho})|\ln(\delta)|$ so \eqref{OK1sub} is satisfied if
\begin{equation}\label{smalldeltassub}
\ln(4)\leq\ln(\underline{\rho})(\rho-\underline{\rho})|\ln(\delta)|,\quad C_4\delta^{\rho-\underline{\rho}}
\leq\frac12,
\end{equation}
which is a condition of the form 
\begin{equation}\label{smalldeltassub2}
C_6\delta\leq 1.
\end{equation}
Let us examine \eqref{GoodDecaypksub} now, still when $k\leq K$. The parameter $3-2\underline{\rho}$ is positive by \eqref{goodurho}. By increasing the value of $C_6$ in \eqref{smalldeltassub2} if necessary, we can assume that 
\begin{equation}
(3-2\underline{\rho})|\ln(\delta)|\geq 2\ln(4),
\end{equation}  
in which case the map $\varphi_\flat\colon t\mapsto 4^{-t}\delta^{-(3-2\underline{\rho})\underline{\rho}^t}$ is non-decreasing and satisfies
\begin{equation}\label{goodvarphiflat}
\varphi_\flat'(t)\geq \ln(4)\varphi_\flat(t)\geq \ln(4)
\end{equation} 
It follows then from \eqref{GoodDecaypksub} that
\begin{equation}\label{sumksub}
\sum_{k\leq K}p_k-p_{k+1}\leq \int_0^K\Phi_\flat(t)dt,\quad \Phi_\flat(t):=\exp\left(
-\frac{1}{C_5}\varphi_\flat(t)
\right).
\end{equation}
By \eqref{goodvarphiflat}, we have 
\begin{equation}\label{diffPhiflat}
-\Phi_\flat'(t)\geq\frac{\ln(4)}{C_5}\Phi_\flat(t)=\frac{1}{C_7}\Phi_\flat(t),
\end{equation}
so
\begin{equation}\label{sumksub2}
\sum_{k\leq K}p_k-p_{k+1}\leq C_7\Phi_\flat(0)\leq C_7\exp\left(-C_5^{-1}\delta^{-(3-2\underline{\rho})}\right).
\end{equation}
If $k\geq K$, then $W_k\lesssim 8^k\xi^{-1}$ by \eqref{Wk} and $2^{-k}\xi\lesssim\ln(1+2^{-k-2}\xi)$, so 
\eqref{OK1} is satisfied if 
\begin{equation}\label{OK1sup}
C_4 \frac{16^k}{\xi^2}
\delta^{(\rho-\underline{\rho})\underline{\rho}^k}
\leq\frac12,
\end{equation}
whereas \eqref{GoodDecaypk} implies 
\begin{equation}\label{GoodDecaypksup}
p_k-p_{k+1}\leq
\exp\left(
-\frac{\xi^32^{-5k}}{C_3}\delta^{-\underline{\rho}^k(3-2\underline{\rho})}
\right).
\end{equation}
Since $\xi\geq 1 $ (see \eqref{largebarxi}), the first criterion \eqref{OK1sup} can be reduced to 
\begin{equation}\label{OK1sup2}
C_4 16^k
\delta^{(\rho-\underline{\rho})\underline{\rho}^k}
\leq\frac12,
\end{equation}
which follows from \eqref{smalldeltassub2}, by increasing the value of $C_6$ if necessary. We can also assume that, under \eqref{smalldeltassub2}, we have 
\begin{equation}
(3-2\underline{\rho})|\ln(\delta)|\geq 2\times 5\ln(2),
\end{equation}
in which case the same analysis as above shows that
\begin{equation}\label{sumksup}
\sum_{k\geq K}p_k-p_{k+1}\leq C_8\Phi_\sharp	(K),\quad\Phi_\sharp(t):=\exp\left(
-\frac{\xi^32^{-5t}}{C_3}\delta^{-\underline{\rho}^t(3-2\underline{\rho})}
\right).
\end{equation}
The rough estimate $\Phi_\sharp(K)\leq\Phi_\sharp	(0)\leq\exp\left(-C_3^{-1}\xi^3\right)$, which yields
\begin{equation}\label{sumksup2}
\sum_{k\geq K}p_k-p_{k+1}\leq C_8\exp\left(-C_3^{-1}\xi^3\right),
\end{equation}
will be sufficient for our purpose. Indeed, using \eqref{sumksub2} and \eqref{sumksup2}, we obtain
\begin{equation}\label{GoodEventUp0U}
\PP(\mathbf{H})\geq p_0-C_7\exp\left(-C_5^{-1}\delta^{-(3-2\underline{\rho})}\right)-C_8\exp\left(-C_3^{-1}\xi^3\right).
\end{equation}
We will take $\delta\approx (\ln(\xi))^{-\frac12}$ in \eqref{deltaxi} below, so the two last terms in \eqref{GoodEventUp0U} have a very fast decay in $\xi$. We will see below how to estimate $p_0$ (and actually will get a decay in $\xi$ which is much slower, \textit{cf.} \eqref{Tailp0}).
\paragraph{Initial estimate.} We wish to estimate the probability $p_0$ of the event 
\begin{equation}
\left\{\mathcal{U}_0=\mathcal{U}(T;\xi/2)\leq \delta\right\}.
\end{equation} 
By the Markov inequality and the inequality \eqref{CLBoundS} used with $\zeta=\xi/4$, we have
\begin{equation}\label{Markov000}
\PP\left[\mathcal{U}_0\geq \delta \right]\leq\delta^{-1}\E\left[\mathcal{U}(T;\xi/2)\right]
\lesssim
\delta^{-1}\E\left[\mathcal{U}_\psi(T;\xi/4)^2\right]+\delta^{-1}\E\left[\mathcal{M}(T;\xi/2)^*\right].
\end{equation}
In the right-hand side of \eqref{Markov000}, the term related to the martingale part can be estimated re\-la\-ti\-ve\-ly easily.
We apply \eqref{noavMQ6} with $\zeta=0$ and use \eqref{largebarxi} which implies $1\lesssim\ln(1+\xi/2)$ to obtain
\begin{equation}\label{noavMQ8}
\dual{\mathcal{M}(\xi/2)}{\mathcal{M}(\xi/2)}_T\lesssim \frac{1}{\xi}\mathcal{U}(T)\mathcal{U}_\Psi(T;0)^2.
\end{equation}
By the Burkholder-Davis-Gundy inequality,
\[
\E\left[\mathcal{M}(T;\xi/2)^*\right]\lesssim\frac{1}{\xi^{1/2}}\E\left[\mathcal{U}(T)^{1/2}\mathcal{U}_\Psi(T;0)\right],
\]
and thus, by the Cauchy-Schwarz inequality and the entropy estimate \eqref{entropyestimatep} with $p=1$,
\begin{equation}\label{p0Martingale0}
\E\left[\mathcal{M}(T;\xi/2)^*\right]\lesssim\frac{1}{\xi^{1/2}}\left\{\E\left[\mathcal{E}(0)\right]\right\}^{1/2}\left\{\E\left[\mathcal{U}_\Psi(T;0)^2\right]\right\}^{1/2}.
\end{equation}
To bound from above the last factor in \eqref{p0Martingale0}, we use the estimate
\begin{equation}\label{PsibyPhi}
|\Psi(a)|^2\lesssim|\Phi(a)|^2,
\end{equation}
which follows from \eqref{PsibyPhi2} below. We exploit then the $L^2$-estimate \eqref{mainestim} to get
\begin{equation}\label{p0Martingale1}
\E\left[\mathcal{M}(T;\xi/2)^*\right]\lesssim\frac{1}{\xi^{1/2}}\left\{\E\left[\mathcal{E}(0)\right]\right\}^{1/2}\left\{\mathtt{E}_2(0)\right\}^{1/2}.
\end{equation}
The estimate of $\E\left[\mathcal{U}_\psi(T;\xi/4)^2\right]$ is done as follows: we observe that $\Psi$ is non-increasing, so that
\begin{equation}\label{PsixibyPhixi}
|\Psi(a_i^\xi)|^2\leq |\Psi(a_i)|^2\mathbf{1}_{a_i>\xi}\leq |\Psi(a_i)|^2\frac{\ln(1+a_i)}{\ln(1+\xi)}\lesssim\frac{|\Phi(a_i)^2|}{\ln(1+\xi)}.
\end{equation}
In the last inequality of \eqref{PsixibyPhixi}, we use the bound
\begin{equation}\label{PsibyPhi2}
|\Psi(a)|^2\ln(1+a)\lesssim|\Phi(a)|^2,
\end{equation}
which is deduced from the obvious estimate 
\[
\Psi(a)\leq\int_0^a\sqrt{\ln(1+s)}ds\leq a\sqrt{\ln(1+a)}.
\]
Then \eqref{PsixibyPhixi} implies 
\begin{equation}\label{p0Markov1}
\E\left[\mathcal{U}_\psi(T;\xi/4)^2\right]\lesssim \frac{\mathtt{E}_2(0)}{\ln(1+\xi)}.
\end{equation}
Set $\mathtt{B}_0=\sup_{1\leq i\leq 4}\|a_{i0}\|_{L^\infty({\T^d})}$. By \eqref{Markov000}, \eqref{p0Martingale1} and \eqref{p0Markov1}, we obtain
\begin{equation}\label{Markov001}
\PP\left[\mathcal{U}_0\geq \delta \right]
\lesssim
\frac{1}{\delta\ln(1+\xi)}\mathtt{E}_2(0)
+\frac{1}{\delta\xi^{1/2}}\mathtt{E}(0)^{1/2}\mathtt{E}_2(0)^{1/2}\leq C(\mathtt{B}_0) \frac{1}{\delta\ln(1+\xi)},
\end{equation}
and, finally, the estimate
\begin{equation}\label{Tailp0}
p_0\geq 1-  \frac{C(\mathtt{B}_0)}{\delta\ln(1+\xi)}.
\end{equation}	
\paragraph{Conclusion.} Let
\begin{equation}
\mathtt{B}_T=\sup_{1\leq i\leq 4}\|a_{i}\|_{L^\infty(Q_T)}.
\end{equation}
We have $\mathtt{B}_T\leq\xi$ if $\mathbf{H}$ is realized, so \eqref{GoodEventUp0U} and \eqref{Tailp0} imply the tail estimate
\begin{equation}\label{tailxi}
\PP(\mathtt{B}_T>\xi)\leq \frac{C(\mathtt{B}_0)}{\delta\ln(1+\xi)}
+C_7\exp\left(-C_5^{-1}\delta^{-(3-2\underline{\rho})}\right)
+C_8\exp\left(-C_3^{-1}\xi^3\right).
\end{equation}
We take
\begin{equation}\label{deltaxi}
\delta=\frac{1}{C_6\vee(\ln(\xi))^{1/2}},
\end{equation}
which satisfies the condition~\eqref{smalldeltassub2}, to obtain (under the condition \eqref{largebarxi})
\begin{equation}\label{tailxi2}
\PP(\mathtt{B}_T>\xi)\leq \frac{\tilde{C}(\mathtt{B}_0)}{\ln(1+\xi)^\frac12}
+C_7\exp\left(-C_9^{-1}\xi^\gamma\right)
+C_8\exp\left(-C_3^{-1}\xi^3\right),
\end{equation}
where $\gamma=\frac{3}{2}-\underline{\rho}>0$. Let $A_0=4(1\vee\mathtt{B}_0)$. The desired result \eqref{eq:Linftyestimate} then follows from \eqref{tailxi2} and the expression
\begin{equation}\label{ETheta}
\E\left[\Theta(\mathtt{B}_T)\mathbf{1}_{\{\mathtt{B}_T\geq A_0\}}\right]
=\int_{A_0}^\infty\Theta'(\xi)\PP(\mathtt{B}_T>\xi)d\xi.
\end{equation}
\end{proof}

\subsection{Existence of global-in-time regular solutions}\label{subsec:globalSol}
 
 The proof of Theorem~\ref{th:globalregular} is now straightforward. We consider the regular solution $a$ constructed in Section~\ref{subsec:globalSol}, based on the sequence of solution $(a^n)$ of the problem with truncated non-linearities. By 
\eqref{eq:Linftyestimate} and the Markov inequality, we have
\begin{equation}
\PP(\tau_n\leq T)\leq\frac{C_\infty}{\Theta(n)},
\end{equation}
which yields $\PP(\tau\leq T)=0$ at the limit $n\to+\infty$. This being true for every $T$, we have $\tau=+\infty$ a.s.

\appendix

\section{Regular solutions to semilinear stochastic parabolic sys\-tems}\label{sec:app1}

We consider the following system of SPDEs: for $1\leq i\leq q$ and $1\leq\alpha\leq d_W$, let $(W^i_\alpha(t))$ be a family of one-dimensional Wiener processes such that, for each $i$, $W^i_1,\dotsc,W^i_{d_W}$ are jointly independent. For $1\leq i\leq q$, let $F_i\colon\R^q\to\R$ and $g_{i,\alpha}\colon\R^q\to\R$ be some given functions and let $\kappa_i$ be some (strictly) positive coefficients. We consider the system

\begin{equation}\label{SSapp}
\begin{split}
 d a_i-\kappa_i\Delta a_i dt&=F_i(a)dt+\sum_{\alpha=1}^{d_W}g_{i,\alpha}(a)d W_\alpha^i(t),\; {\rm in}\  \;Q_T:= {\T^d}\times(0,T),\\
a_i(0)&=a_{i0}\geq 0,\; {\rm in}\   {\T^d},
\end{split}
\end{equation}
for $i=1,\dotsc,q$, where $a=(a_i)_{1,q}$, $a_0=(a_{i0})$ is a given function ${\T^d}\to\R^q$ and $T>0$. Weak solutions are defined as follows.

\begin{definition}[Weak solution to \eqref{SSapp}]\label{def:weaksolSSapp} A process $a\in L^2(\Omega\times[0,T];\mathcal{P};H^1({\T^d}))$ (where $\mathcal{P}$ is the predictable $\sigma$-algebra) is said to be a weak solution to \eqref{SSapp} if
\begin{enumerate}
\item $F_i(a),g_{i,\alpha}(a)\in L^2(\Omega\times Q_T)$,
\item $a_i\in L^2(\Omega;C([0,T];L^2({\T^d})))$,
\item for all $t\in[0,T]$, for all $\varphi\in H^1({\T^d})$, $\PP$-a.s., 
\begin{equation}\label{eq:weaksolapp}
\dual{a_i(t)}{\varphi}=\dual{a_{i0}}{\varphi}\kappa_i-\int_0^t\dual{\nabla a_i(s)}{\varphi}ds+\int_0^t\dual{F_i(a)}{\varphi}ds
+\sum_{\alpha=1}^{d_W}\dual{g_{i,\alpha}(a)}{\varphi}d W_\alpha^i(t).
\end{equation}
\end{enumerate}
\end{definition}

We can then state the following result.

\begin{theorem}[Semilinear stochastic parabolic systems]\label{th:RegSolSyst} Let $p\in[2,\infty)$, $q\in(2,\infty)$ and let $m$ be an integer $\geq 2$ such that $mp>d+2$. Assume 
\begin{equation}
u_0\in W^{m,p}({\T^d})\cap W^{1,mp}({\T^d}),
\end{equation}
and suppose that $F_i\colon\R^q\to\R$ and $g_{i,\alpha}\colon\R^q\to\R$ are functions of class $C^m$, bounded, with all their derivatives up to order $m$ bounded. Then \eqref{SSapp} admits a unique weak solution which belongs to the space
\begin{equation}
L^q(\Omega;C([0,T];W^{m,p}({\T^d})))\cap L^{mq}(\Omega;C([0,T];W^{1,mp}({\T^d}))).
\end{equation}
\end{theorem}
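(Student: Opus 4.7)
The plan is to construct the regular solution by a Banach fixed-point argument directly in the Bochner space
\[
X_T := L^q(\Omega;C([0,T];W^{m,p}({\T^d}))) \cap L^{mq}(\Omega;C([0,T];W^{1,mp}({\T^d}))),
\]
exploiting the hypothesis $mp>d+2$, which in particular makes $W^{m,p}({\T^d})$ a Banach algebra continuously embedded in $L^\infty({\T^d})$ and also embeds $W^{1,mp}({\T^d})$ into $L^\infty({\T^d})$. First I would recast \eqref{SSapp} in mild form: setting $S_i(t)=e^{t\kappa_i\Delta}$, I consider the map $\Gamma=(\Gamma_i)$ defined by
\[
\Gamma_i[a](t) = S_i(t)\,a_{i0} + \int_0^t S_i(t-s)\,F_i(a(s))\,ds + \sum_{\alpha=1}^{d_W}\int_0^t S_i(t-s)\,g_{i,\alpha}(a(s))\,dW^i_\alpha(s),
\]
and show that $\Gamma$ maps a suitable ball of $X_T$ into itself and contracts for $T$ small. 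The crux is that, by the chain rule and the assumption $F_i,g_{i,\alpha}\in C^m_b$, the substitution operators $a\mapsto F_i(a)$, $a\mapsto g_{i,\alpha}(a)$ map both $W^{m,p}$ and $W^{1,mp}$ into themselves, with bounds of the form $\|F_i(a)\|_{W^{m,p}}\leq C(1+\|a\|_{L^\infty})^{m-1}(1+\|a\|_{W^{m,p}})$, and they are Lipschitz on bounded subsets of these spaces.

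The deterministic convolution is handled by standard $L^p$-parabolic regularity for the semigroup $S_i$. The stochastic convolution is the technical heart of the argument: one needs an estimate of the form
\[
\E\sup_{t\leq T}\Bigl\|\int_0^t S_i(t-s)\,G(s)\,dW^i_\alpha(s)\Bigr\|_{W^{k,r}}^s
\leq C(T)\,\E\Bigl(\int_0^T \|G(s)\|_{W^{k,r}}^2\,ds\Bigr)^{s/2},
\]
with $(k,r,s)=(m,p,q)$ and the analogous statement with $(k,r,s)=(1,mp,mq)$, where $C(T)\to 0$ as $T\to 0$ after sufficient factorisation. Such inequalities form the content of Krylov's $L^p$-theory for stochastic parabolic equations (one may alternatively invoke van Neerven--Veraar--Weis). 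Combining them with the substitution-operator estimates above gives the self-map and contraction properties, hence local-in-time existence and uniqueness in $X_{T_0}$.

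Because the nonlinearities $F_i$ and $g_{i,\alpha}$ are globally bounded together with all their derivatives up to order $m$, a Gronwall argument on the $X_{T_0}$-norm (applied to the mild representation itself, not only to differences) shows that the $W^{m,p}$- and $W^{1,mp}$-norms of $a(t)$ cannot blow up in finite time, so the local solution extends to all of $[0,T]$. The weak formulation \eqref{eq:weaksolapp} is recovered from the mild form by standard manipulations, since $\PP$-a.s.\ $a\in C([0,T];W^{m,p})\subset C([0,T];H^1)$ and $F_i(a),g_{i,\alpha}(a)\in L^2(\Omega\times Q_T)$ by boundedness. The main obstacle is clearly the stochastic maximal regularity: with $q>2$ and general $p\geq 2$, the displayed estimate lies beyond elementary Hilbert-space theory and truly requires the full Krylov $L^p$-framework. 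Granting it as a black box, the remainder of the proof follows the template of the deterministic semilinear parabolic theory.
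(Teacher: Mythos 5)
Your outline is correct in substance, but it reconstructs from scratch what the paper deliberately does not: the paper's proof of Theorem~\ref{th:RegSolSyst} is a one-paragraph reduction to Theorem~2.1 of \cite{Hofmanova13a}, which already establishes exactly this well-posedness statement (mild formulation, fixed point in the intersection space, factorization estimates for the stochastic convolution) for scalar semilinear parabolic SPDEs. The only new ingredient the paper supplies is the extension to systems, namely the vector-valued version of the composition estimate $\|G(h)\|_{W^{m,p}}\leq C(1+\|h\|_{W^{1,mp}}^m+\|h\|_{W^{m,p}})$, obtained from a multivariate Fa\`a di Bruno formula. Your sketch contains the same key ingredient in a Moser-type variant, $\|F_i(a)\|_{W^{m,p}}\leq C(1+\|a\|_{L^\infty})^{m-1}(1+\|a\|_{W^{m,p}})$; both forms are valid (yours via Gagliardo--Nirenberg, the paper's via H\"older on products of first derivatives in $L^{mp}$), and both explain why the statement carries the intersection with $L^{mq}(\Omega;C([0,T];W^{1,mp}))$. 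Two remarks. First, you correctly flag the stochastic maximal regularity estimate as the part that cannot be done by elementary means; that is precisely what the citation to \cite{Hofmanova13a} (resp.\ Krylov's $L^p$-theory underlying it) is for, so treating it as a black box matches the paper's economy. Second, in the continuation step you should make the two-tier structure explicit: first close a Gronwall estimate for $\|a\|_{W^{1,mp}}$, where $C^1_b$ nonlinearities give a bound \emph{linear} in that norm, and only then feed the resulting a priori control of $\|a\|_{W^{1,mp}}$ (equivalently of $\|a\|_{L^\infty}$) into the $W^{m,p}$-estimate so that it too becomes linear in the top-order norm. Without this ordering the polynomial factor $(1+\|a\|_{L^\infty})^{m-1}$ is not yet controlled and the Gronwall argument as stated does not close. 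With that made precise, your argument is a legitimate self-contained alternative to the paper's citation.
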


\begin{proof}[Proof of Theorem~\ref{th:RegSolSyst}] The statement of Theorem~\ref{th:RegSolSyst} is essentially the statement of Theorem~2.1 in \cite{Hofmanova13a}. The extension to systems does not raise particular difficulties. The essential point is a generalization of the estimate
\begin{equation}
\|G(h)\|_{W^{m.p}}\leq C\left(1+\|h\|_{W^{1,mp}}^m+\|h\|_{W^{m,p}}\right)
\end{equation}
in \cite[Proposition~3.1]{Hofmanova13a}, given for $h$ real-valued and $G$ defined on $\R$, to the case where $h$ takes values in $\R^q$ and $G$ is defined on $\R^q$. This is obtained simply by generalizing the chain-rule formula
\begin{equation}
D^\beta G(h(x))=\sum_{l=1}^{|\beta|}\sum_{\substack{\alpha_1+\dotsb+\alpha_l=\beta \\ \alpha_i\not=0}}C_{\beta,l,\alpha_1,\dotsb,\alpha_l}G^{(l)}(h(x)) D^{\alpha_1}h(x)\dotsb D^{\alpha_l}h(x)
\end{equation}
to
\begin{multline}
D^\beta G(h(x))=\sum_{l=1}^{|\beta|}\sum_{\substack{\alpha_1+\dotsb+\alpha_l=\beta \\ \alpha_i\not=0}}C_{\beta,l,\alpha_1,\dotsb,\alpha_l}\\
\times\sum_{|\gamma|=l}D^\gamma G(h(x))\sum_{\substack{\delta_1+\dots+\delta_l=\gamma \\ |\delta_i|=1}} D^{\alpha_1}h_{\delta_1}(x)\dotsb D^{\alpha_l}h_{\delta_l}(x),
\end{multline}
where, given $\delta\in\N^q$ of length $|\delta|=1$, $h_{\delta}$ denote the component $h_i$, where $i$ is the only index such that $\delta_i\not=0$.
\end{proof}



\def\cprime{$'$}


\end{document}